\newtheorem{Theorem}{Theorem}[section]
\newtheorem{Lemma}[Theorem]{Lemma}
\theoremstyle{definition}
\newtheorem{Definition}[Theorem]{Definition}
\newtheorem*{RemarkNoNumber}{Remark}
\numberwithin{equation}{section}
\newcommand{\mR}{\mathbb{R}}                    
\newcommand{\mC}{\mathbb{C}}                    
\newcommand{\mZ}{\mathbb{Z}}                    
\newcommand{\mN}{\mathbb{N}}                    
\newcommand{\abs}[1]{\lvert #1 \rvert}          
\newcommand{\norm}[1]{\lVert #1 \rVert}         
\newcommand{\p}{\partial}
\newcommand{\eps}{\varepsilon}
\newcommand{\R}{\mathbb{R}}                    
\newcommand{\kommentar}[1]{}
\newcounter{sidenote}
\begin{document}

\title[Dimension bounds in monotonicity methods]{Dimension bounds in monotonicity methods for the Helmholtz equation} 

\author{Bastian Harrach}
\address{Institute for Mathematics, Goethe-University Frankfurt, Frankfurt am Main, Germany}
\email{harrach@math.uni-frankfurt.de}

\author{Valter Pohjola}
\address{Research Unit of Mathematical Sciences, University of Oulu, Oulu, Finland}
\email{valter.pohjola@gmail.com}

\author{Mikko Salo}
\address{University of Jyvaskyla, Department of Mathematics and Statistics, PO Box 35, 40014 University of Jyvaskyla, Finland}
\email{mikko.j.salo@jyu.fi}

\subjclass{35R30}


\keywords{inverse problems, Helmholtz equation, montonicity method}

\maketitle

\begin{abstract}
The article \cite{harrach2018helmholtz} established a monotonicity inequality for the Helmholtz
equation and presented applications to shape detection and local uniqueness in
inverse boundary problems. The monotonicity inequality states that if two
scattering coefficients satisfy $q_1 \leq q_2$, then the corresponding
Neumann-to-Dirichlet operators satisfy $\Lambda(q_1) \leq \Lambda(q_2)$ up to a
finite dimensional subspace. Here we improve the bounds for the
dimension of this space. In particular, if $q_1$ and $q_2$ have the same number
of positive Neumann eigenvalues, then the finite dimensional space is trivial.
\end{abstract}

\let\thefootnote\relax\footnotetext{\hrule \vspace{1ex} \centering This is a preprint version of a journal article published in\\
 \emph{SIAM J. Math. Anal.} \textbf{51}(4), 2995--3019, 2019
(\url{https://doi.org/10.1137/19M1240708}).
}

\section{Introduction} \label{sec_introduction}

This article is concerned with monotonicity properties arising in inverse problems and applications. As a basic example, if $\sigma_1$ and $\sigma_2$ are positive functions (representing electrical conductivities) in a bounded domain $\Omega \subset \mR^n$ and if $\Lambda(\sigma_1)$ and $\Lambda(\sigma_2)$ are the corresponding Neumann-to-Dirichlet operators (representing electrical boundary measurements), then one has the monotonicity property 
\[
\sigma_1 \leq \sigma_2 \ \implies \ \Lambda(\sigma_1) \geq \Lambda(\sigma_2).
\]
The last statement means that $\Lambda(\sigma_1) - \Lambda(\sigma_2)$ is a
positive semidefinite operator on the mean-free functions in $L^2(\p \Omega)$ (the so-called Loewner
order). This property, together with a certain nontrivial converse based on
localized potentials \cite{gebauer2008localized}, leads to efficient
monotonicity based methods for determining shapes of obstacles or inclusions
from electrical or optical boundary measurements, cf.~\cite{tamburrino2002new}
for the origin of this idea,
\cite{harrach2013monotonicity} for the proof of the converse monotonicity property, and the list of references for recent works on monotonicity-based methods at the end of this introduction. 

The recent work \cite{harrach2018helmholtz} extends monotonicity based methods to imaging problems with positive frequency, in particular acoustic imaging modelled by the Helmholtz equation. It turns out that the basic monotonicity property may fail in this case, but monotonicity still holds up to a finite dimensional space and \cite{harrach2018helmholtz} shows that shape detection methods and local uniqueness results can be developed also in this situation.
\cite{griesmaier2018monotonicity} extends this idea to farfield inverse scattering and shows numerical reconstructions.

Let us describe the results of \cite{harrach2018helmholtz} in more detail. Let
$\Omega \subset \mR^n$, $n \geq 2$ be a bounded Lipschitz domain, and let $q \in
L^{\infty}(\Omega)$ be a real valued function with $q \not\equiv 0$. Let $k >
0$, and consider the Neumann problem 
\begin{equation} \label{neumann_problem}
\left\{ \begin{array}{rcl} (\Delta + k^2 q) u \!\!\!&=&\!\!\! 0 \text{ in $\Omega$}, \\[5pt]
\partial_{\nu} u \!\!\!&=&\!\!\! g \text{ on $\partial \Omega$}. \end{array} \right.
\end{equation}
We assume that $k > 0$ is not a resonance frequency, which means that the Neumann problem has a unique solution $u \in H^1(\Omega)$ for any $g \in L^2(\p \Omega)$. Define the Neumann-to-Dirichlet (ND) operator 
\[
\Lambda(q): L^2(\partial \Omega) \to L^2(\partial \Omega), \ \ g \mapsto u|_{\partial \Omega}.
\]
Let $\lambda_1 \geq \lambda_2 \geq \cdots \to -\infty$ be the Neumann eigenvalues of $\Delta + k^2 q$ in $\Omega$, and let $d(q)$ be the number of positive Neumann eigenvalues (counted with multiplicity).

In \cite[Theorem 3.5]{harrach2018helmholtz} it was proved that 
\[
q_2 - q_1 \geq 0 \implies \Lambda(q_2)-\Lambda(q_1) \text{ has only finitely many negative eigenvalues}.
\]
Here we consider $\Lambda(q_2)-\Lambda(q_1)$ as a compact self-adjoint operator
on $L^2(\partial \Omega)$. Let $d(q_1, q_2)$ be the number of negative
eigenvalues of $\Lambda_{q_2} - \Lambda_{q_1}$ (counting with multiplicity). In
\cite[Theorem 3.5]{harrach2018helmholtz} it was also proved that $d(q_1, q_2)$
satisfies the bound 
\[
d(q_1, q_2) \leq d(q_2).
\]
The next result gives a more precise estimate for $d(q_1, q_2)$.

\begin{Theorem} \label{thm_monotonicity_dimension}
Let $q_1, q_2 \in L^{\infty}(\Omega) \setminus \{0\}$ be such that $k$ is not a
resonance frequency for $q_1$ and $q_2$. Assume that $q_1 \leq q_2$ a.e.\ in
$\Omega$. 
Then 
\[
d(q_1, q_2) \leq d(q_2) - d(q_1).
\]
\end{Theorem}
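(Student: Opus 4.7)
The plan is to connect $q_1$ and $q_2$ by the linear homotopy $q_s = q_1 + s(q_2 - q_1)$, $s \in [0,1]$, and integrate the derivative of the quadratic form $s \mapsto \langle \Lambda(q_s) g, g\rangle$ on a subspace of boundary data of codimension exactly $d(q_2) - d(q_1)$; the resulting non-negativity, combined with min-max, will yield the bound. To set up this subspace, note that the Neumann eigenvalues $\lambda_j(s)$ of $\Delta + k^2 q_s$ are real-analytic in $s$ and, by min-max applied to the non-decreasing family of coefficients, non-decreasing in $s$. Since $\lambda_j(0), \lambda_j(1) \ne 0$ by hypothesis, each $\lambda_j$ has a zero in $[0,1]$ if and only if $d(q_1) < j \le d(q_2)$, and such a zero is unique (a non-constant monotone analytic function has isolated zeros). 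Let $S \subset (0,1)$ be the resulting finite set of resonant parameter values, and for $s_* \in S$ let $E_{s_*}$ denote the Neumann kernel of $\Delta + k^2 q_{s_*}$; then $\sum_{s_* \in S} \dim E_{s_*} = d(q_2) - d(q_1)$. Weak unique continuation forces every nonzero $\phi \in E_{s_*}$ to have nontrivial boundary trace (otherwise $\phi$ would have vanishing Cauchy data), so the subspace
\[
W := \bigl\{ g \in L^2(\p\Omega) :\ \langle g, \phi|_{\p\Omega}\rangle_{L^2(\p\Omega)} = 0 \text{ for all } \phi \in E_{s_*},\ s_* \in S \bigr\}
\]
has codimension exactly $d(q_2) - d(q_1)$ in $L^2(\p\Omega)$.

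Next I would show $\langle (\Lambda(q_2) - \Lambda(q_1)) g, g\rangle \ge 0$ for every $g \in W$; min-max then gives the theorem. For $s \notin S$, the Neumann solution $u_s$ with data $g$ is well-defined, and a short calculation using Green's identity gives
\[
\frac{d}{ds}\langle \Lambda(q_s) g, g\rangle = k^2 \int_\Omega (q_2 - q_1)\, u_s^2 \, dx \ge 0.
\]
The condition $g \in W$ is exactly the Fredholm solvability criterion at each resonance $s_* \in S$, so $\langle \Lambda(q_{s_*}) g, g\rangle$ is defined (independent of the choice of solution modulo $E_{s_*}$). The key remaining step is to establish continuity of $s \mapsto \langle \Lambda(q_s) g, g\rangle$ across each $s_*$. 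Kato-type analytic perturbation theory provides analytic branches $\lambda_j(s)$ and $\phi_j(s)$ near $s_*$, and a spectral expansion decomposes $\Lambda(q_s)$ into a regular part plus singular contributions of the form $\lambda_j(s)^{-1}\, \phi_j(s)|_{\p\Omega} \otimes \phi_j(s)|_{\p\Omega}$ from the modes that cross zero at $s_*$. Since $g \perp \phi_j(s_*)|_{\p\Omega}$ and everything depends analytically on $s$, one has $|\langle g, \phi_j(s)|_{\p\Omega}\rangle|^2 = O((s-s_*)^2)$, which cancels the simple zero of $\lambda_j(s)$ at $s_*$. Integrating therefore yields
\[
\langle (\Lambda(q_2) - \Lambda(q_1)) g, g\rangle = \int_0^1 k^2 \int_\Omega (q_2 - q_1)\, u_s^2 \, dx \, ds \ge 0,
\]
and min-max gives $d(q_1,q_2) \le \mathrm{codim}\, W = d(q_2) - d(q_1)$.

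The technical heart of the argument is the passage through each resonance value $s_*$: one must justify rigorously that the apparent poles of $\Lambda(q_s)$ are cancelled by the orthogonality conditions defining $W$. This can be handled either by the Kato-theoretic spectral expansion sketched above or by proving continuity of $s \mapsto u_s$ directly via a resolvent argument on the invariant complement of $E_{s_*}$. The remaining ingredients (analyticity and monotonicity of eigenvalues, Fredholm alternative, weak unique continuation, and min-max) are standard.
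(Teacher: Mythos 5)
Your proposal follows essentially the same route as the paper (linear interpolation, analytic perturbation theory for the eigenvalue branches, an orthogonality condition at the resonant parameter values, the Green's-identity derivative formula, and integration). However, there is a genuine gap in how you establish that each $\lambda_j$ crosses zero simply.

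You derive monotonicity of the eigenvalues from min-max, which gives only the \emph{non-strict} inequality $\lambda_j(s_1) \le \lambda_j(s_2)$ for $s_1 \le s_2$, and you then observe that a non-constant monotone analytic function has isolated zeros. But isolated is not the same as simple: a monotone analytic function can have a zero of odd order $m \ge 3$, and your cancellation argument at a resonance needs $m=1$. Indeed, orthogonality $g \perp \gamma(E_{s_*})$ together with analyticity of $s\mapsto\varphi_j(s)$ gives $\langle g,\gamma\varphi_j(s)\rangle = O(s-s_*)$, so the singular term $\lambda_j(s)^{-1}\langle g,\gamma\varphi_j(s)\rangle^2$ is $O((s-s_*)^{2-m})$; this is bounded only if $m\le 2$, hence (since $m$ is odd) only if $m=1$. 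To obtain $m=1$, i.e.\ $\lambda_j'(s_*)>0$, the paper uses the variational (Feynman--Hellmann) formula
\[
\lambda_j'(s) = k^2 \int_\Omega (q_2-q_1)\,\varphi_j(s)^2\,dx,
\]
combined with the unique continuation principle (applied to $\varphi_j(s)$ on the set where $q_2-q_1>0$) to show strict positivity. You invoke unique continuation only for the nontriviality of boundary traces, not for this step, so your argument as stated does not rule out higher-order crossings. This is the same technical point that underlies the paper's Lemma on Lipschitz continuity of $s\mapsto u_s$ through resonances, where the factor $\mu_j(t_0)=\lambda_j'(t_0)>0$ is used explicitly.

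A secondary, harmless imprecision: you state that $W$ has codimension \emph{exactly} $d(q_2)-d(q_1)$. The traces of the kernels $E_{s_*}$ at distinct resonant times need not be linearly independent, so in general one only gets $\mathrm{codim}\, W \le d(q_2)-d(q_1)$ (the paper notes this explicitly). Since only the upper bound on the codimension is needed for the conclusion, this does not affect the result.
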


This has an immediate consequence: even if $q_1$ and $q_2$ are positive, the standard monotonicity inequality for the ND operators remains true if $q_1$ and $q_2$ have the same number of positive Neumann eigenvalues.

\begin{Theorem} \label{thm_monotonicity_dimension_consequence}
Let $q_1, q_2 \in L^{\infty}(\Omega) \setminus \{0\}$ be such that $k$ is not a
resonance frequency for $q_1$ and $q_2$. Assume that $d(q_1) = d(q_2)$. Then 
\[
q_1 \leq q_2 \implies \Lambda(q_1) \leq \Lambda(q_2).
\]
\end{Theorem}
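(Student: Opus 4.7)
The plan is to deduce Theorem \ref{thm_monotonicity_dimension_consequence} directly from Theorem \ref{thm_monotonicity_dimension}, since the bound $d(q_1,q_2) \leq d(q_2) - d(q_1)$ collapses precisely when $d(q_1) = d(q_2)$.

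First, I would note that the hypotheses of Theorem \ref{thm_monotonicity_dimension} are in force: both $q_1, q_2 \in L^\infty(\Omega) \setminus \{0\}$, neither has $k$ as a resonance frequency, and $q_1 \leq q_2$ a.e.\ in $\Omega$. Applying Theorem \ref{thm_monotonicity_dimension} gives
\[
d(q_1,q_2) \leq d(q_2) - d(q_1) = 0,
\]
where the last equality uses the extra hypothesis $d(q_1) = d(q_2)$. Since $d(q_1,q_2)$ is defined as a count of negative eigenvalues, it is a nonnegative integer, hence $d(q_1,q_2) = 0$.

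Next, I would invoke the interpretation of $\Lambda(q_2) - \Lambda(q_1)$ already recalled in the introduction: it is a compact self-adjoint operator on $L^2(\partial\Omega)$. A compact self-adjoint operator with no negative eigenvalues has spectrum contained in $[0,\infty)$ and is therefore positive semidefinite. This gives $\Lambda(q_2) - \Lambda(q_1) \geq 0$, i.e.\ $\Lambda(q_1) \leq \Lambda(q_2)$ in the Loewner order, which is the desired conclusion.

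The proof is therefore a one-line corollary, and there is no real obstacle once Theorem \ref{thm_monotonicity_dimension} is available; the entire content of the statement is packed into the sharpened dimension bound. The only conceptual point worth emphasizing in the write-up is why $d(q_1,q_2) = 0$ suffices for the operator inequality, which reduces to the standard spectral characterization of positivity for compact self-adjoint operators.
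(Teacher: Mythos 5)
Your argument matches the paper's: Theorem \ref{thm_monotonicity_dimension_consequence} is stated as an immediate consequence of Theorem \ref{thm_monotonicity_dimension}, and your deduction that $d(q_1,q_2) \le d(q_2)-d(q_1)=0$ forces $\Lambda(q_2)-\Lambda(q_1)$ to be positive semidefinite (being compact, self-adjoint, and without negative eigenvalues) is precisely the intended one-line corollary.
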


Let us describe the main idea of the proof of Theorem \ref{thm_monotonicity_dimension}. If $q_1$ and $q_2$ are in $L^{\infty}(\Omega) \setminus \{0\}$ and satisfy $q_1 \leq q_2$, we define the interpolated potentials 
\[
q(t) = q_1 + t(q_2-q_1), \qquad t \in [0,1].
\]
Denote by $\lambda_1(t) \geq \lambda_2(t) \geq \cdots \to -\infty$ the Neumann
eigenvalues of $\Delta + k^2 q(t)$ in $\Omega$. Assume for simplicity that each
$\lambda_j(t)$ is a simple eigenvalue (the proof in Section \ref{sec_proof1}
removes this restriction). Then each map $\lambda_j: [0,1] \to \mR$ is smooth
and strictly increasing. This follows from the variational formula 
\[
\lambda_j'(t) = k^2 \int_{\Omega} (q_2-q_1) \varphi_j(t)^2 \,dx
\]
where $\{ \varphi_j(t) \}$ is an $L^2$-orthonormal basis consisting of Neumann eigenfunctions corresponding to $\{\lambda_j(t) \}$, and from the unique continuation principle. Now, when $t=0$ one starts with $d(q_1)$ positive eigenvalues, and when $t=1$ one has $d(q_2)$ positive eigenvalues. Since the maps $t \mapsto \lambda_j(t)$ are strictly increasing, exactly $d(q_2)-d(q_1)$ eigenvalues cross the real axis as $t$ increases to $1$, and the eigenspace at each crossing gives rise to a one-dimensional subspace of $L^2(\partial \Omega)$. Now if $g \in L^2(\partial \Omega)$ is orthogonal to all these one-dimensional subspaces, it follows that $((\Lambda(q_2)-\Lambda(q_1)) g, g) \geq 0$, proving that the finite-dimensional obstruction has dimension $\leq d(q_2)-d(q_1)$.

The next result complements Theorem \ref{thm_monotonicity_dimension} by showing that in certain cases where $q_1$ and $q_2$ differ by a constant, there are lower bounds on the number of negative eigenvalues. Its proof is based on computing an expression for the quadratic form $((\Lambda(q_2-b) -\Lambda(q_1+a)) g, g)$ in terms of the Neumann eigenfunctions of $\Delta + k^2 q_1$, and showing that the quadratic form is negative for $g$ in a space spanned by finitely many traces of Neumann eigenfunctions.

\begin{Theorem} \label{thm_monotonicity_dimension_optimal}
Let $q_1, q_2 \in L^{\infty}(\Omega) \setminus \{0\}$ be such that $k$ is non-resonant for $q_1$ and $q_2$. Assume that $q_2-q_1$ is a positive constant and $d(q_2) > d(q_1)$. Let $\mu_1$ be the largest negative Neumann eigenvalue of $\Delta + k^2 q_1$, and let $\mu_2$ be the smallest positive eigenvalue of $\Delta + k^2 q_2$. Let $N_j$ be the multiplicity of $\mu_j$.
\begin{enumerate}
\item[(a)] 
$\Lambda(q_2-b) - \Lambda(q_1+a)$ has at least $N_1$ negative eigenvalues whenever $a < k^{-2} |\mu_1|$ is sufficiently close to $k^{-2} |\mu_1|$, and $b \in [0,k^{-2} \mu_2)$.
\item[(b)] 
$\Lambda(q_2-b) - \Lambda(q_1+a)$ has at least $N_2$ negative eigenvalues whenever $b < k^{-2} \mu_2$ is sufficiently close to $k^{-2} \mu_2$, and $a \in [0,k^{-2} |\mu_1|)$.
\end{enumerate}
\end{Theorem}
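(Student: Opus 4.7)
The plan is to derive an explicit spectral representation of $\big((\Lambda(q_2-b)-\Lambda(q_1+a))g,g\big)_{L^2(\p\Omega)}$ in terms of the Neumann eigenfunctions of $\Delta+k^2q_1$, then exhibit an $N_j$-dimensional subspace of $L^2(\p\Omega)$ on which the resulting quadratic form becomes arbitrarily negative as the relevant parameter approaches its endpoint. The min-max characterization of eigenvalues of the compact self-adjoint operator $\Lambda(q_2-b)-\Lambda(q_1+a)$ then yields the stated count.

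Write $c:=q_2-q_1>0$ and let $\{\psi_j\}$ be an $L^2(\Omega)$-orthonormal basis of Neumann eigenfunctions of $\Delta+k^2q_1$ with eigenvalues $\lambda_j$; set $\gamma_j:=\psi_j|_{\p\Omega}$. Since $q_2-q_1$ is a constant, the same $\psi_j$ are simultaneously Neumann eigenfunctions of $\Delta+k^2(q_1+s)$ with eigenvalues $\lambda_j+k^2s$; in particular $\mu_1=\lambda_j$ for $j$ in an index set $J_1$ with $|J_1|=N_1$, and $\mu_2=\lambda_j+k^2c$ for $j$ in an index set $J_2$ with $|J_2|=N_2$. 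Letting $u_s$ solve $(\Delta+k^2(q_1+s))u_s=0$ with $\p_\nu u_s=g$, Green's identity applied to $u_s$ and $\psi_j$ yields the Fourier coefficient $(u_s,\psi_j)_{L^2(\Omega)}=-(\gamma_j,g)/(\lambda_j+k^2s)$, and applied to $u_s,u_t$ with common boundary data gives the Alessandrini-type identity $((\Lambda(q_1+s)-\Lambda(q_1+t))g,g)=k^2(s-t)\int_\Omega u_s u_t\,dx$. Specializing to $s=c-b$ and $t=a$ and applying Parseval in $L^2(\Omega)$ gives
\[
\big((\Lambda(q_2-b)-\Lambda(q_1+a))g,g\big)=k^2(c-a-b)\sum_j\frac{(\gamma_j,g)^2}{(\lambda_j+k^2a)(\lambda_j+k^2(c-b))}.
\]
The hypothesis $d(q_2)>d(q_1)$ forces $|\mu_1|+\mu_2\leq k^2c$, so the prefactor $c-a-b$ is strictly positive throughout the admissible parameter region for both (a) and (b).

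For part (a) I set $V_1:=\mathrm{span}\{\gamma_j:j\in J_1\}\subset L^2(\p\Omega)$. A unique-continuation argument establishes $\dim V_1=N_1$: any $\psi=\sum_{j\in J_1}c_j\psi_j$ with $\psi|_{\p\Omega}=0$ automatically has $\p_\nu\psi=0$ and solves $(\Delta+k^2q_1-\mu_1)\psi=0$, so extension by zero across $\p\Omega$ and standard UCP for $\Delta+V$ with bounded $V$ force $\psi\equiv 0$. For $j\in J_1$ the denominator equals $(\mu_1+k^2a)(\mu_1+k^2c-k^2b)$: a small negative number times a positive quantity bounded below by $\mu_2-k^2b>0$ (using $\mu_2\leq\mu_1+k^2c$). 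For $j\notin J_1$ the discreteness of $\{\lambda_j\}$ gives a uniform lower bound $|\lambda_j-\mu_1|\geq\delta>0$, and with $b$ fixed the denominator remains bounded away from $0$ uniformly in $j$ once $a$ lies in a small left-neighborhood of $k^{-2}|\mu_1|$. A Gram-matrix estimate on $V_1$ yields $\sum_{j\in J_1}(\gamma_j,g)^2\geq C_0\|g\|^2_{L^2(\p\Omega)}$, while the tail $\sum_{j\notin J_1}(\gamma_j,g)^2/|(\lambda_j+k^2a)(\lambda_j+k^2(c-b))|$ stays bounded on the finite-dimensional $V_1$. Dividing by $|\mu_1+k^2a|$ makes the $J_1$-contribution dominate, so the quadratic form is strictly negative on $V_1\setminus\{0\}$ once $a$ is close enough to $k^{-2}|\mu_1|$; min-max then produces at least $N_1$ negative eigenvalues.

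Part (b) is handled symmetrically with $V_2:=\mathrm{span}\{\gamma_j:j\in J_2\}$, letting $b\nearrow k^{-2}\mu_2$ so that $\lambda_j+k^2(c-b)=\mu_2-k^2b\to 0^+$ for $j\in J_2$, while $\lambda_j+k^2a=\mu_2-k^2c+k^2a$ stays strictly negative thanks to $\mu_2-k^2c\leq\mu_1$ and $a<k^{-2}|\mu_1|$. I expect the main technical obstacle in both parts to be the uniform control of the tail sum over indices outside the relevant eigenspace $J_i$; however, since the test subspace $V_i$ is finite-dimensional this reduces to estimating a bounded collection of quantities $((\Lambda(q_2-b)-\Lambda(q_1+a))\gamma_i,\gamma_{i'})$ with the singular $J_i$-contribution subtracted off, and continuous dependence on the remaining parameter supplies the needed uniformity.
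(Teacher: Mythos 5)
Your proposal is correct and follows essentially the same strategy as the paper: exploit the fact that for a constant shift $q_2-q_1=c$ the Neumann eigenfunctions of $\Delta+k^2 q(t)$ are $t$-independent and the eigenvalues shift linearly, obtain a spectral formula for the quadratic form of $\Lambda(q_2-b)-\Lambda(q_1+a)$ in which the indices at the crossing eigenvalue contribute a negative term blowing up as the parameter approaches the resonance, control the remaining terms uniformly, and then invoke a min-max/dimension-counting argument on the trace span $V_i$ (whose dimension equals $N_i$ by unique continuation). The main difference is cosmetic but worth noting. You derive a single clean identity
\[
\bigl((\Lambda(q_2-b)-\Lambda(q_1+a))g,g\bigr)=k^2(c-a-b)\sum_{j}\frac{(\gamma_j,g)^2}{(\lambda_j+k^2a)(\lambda_j+k^2(c-b))}
\]
via the Alessandrini identity and Parseval, whereas the paper splits the solution $u_t=v_t+w_t$ into the crossing-index part and the rest, writes the crossing-index contribution as the finite part of this same sum, and controls $\int_{\p\Omega}(w_\beta-w_\alpha)g\,dS$ through the uniform $H^1$ estimate $\|w_t\|_{H^1}\le C\|g\|_{L^2(\p\Omega)}$ from Lemma 3.3(c). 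Your closed-form identity bypasses the need for the $w_t$ machinery (and sidesteps the issue that boundary traces do not commute with $L^2(\Omega)$ Fourier sums, since the Alessandrini route only ever needs $L^2(\Omega)$ convergence), at the cost of having to estimate the tail of the series by hand, which you correctly identify as the technical crux.

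One concrete gap: your tail estimate is stated ``with $b$ fixed,'' which yields a threshold $a_0=a_0(b)$ on $a$ that can depend on $b$. The theorem (and the paper's proof) asserts a threshold on $a$ that works simultaneously for all $b\in[0,k^{-2}\mu_2)$. Bounding the tail in absolute value cannot give this: for indices $j$ with $d_1+N_1<j\le d_2$ the factor $\lambda_j+k^2(c-b)$ tends to $0^+$ as $b\nearrow k^{-2}\mu_2$, so those terms are not uniformly bounded in absolute value. However, those very terms have a negative denominator product (one factor $<0$, one $>0$) and therefore contribute negatively to the quadratic form; they should simply be discarded rather than absorbed into an absolute-value error. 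Once you observe that the only terms contributing positively are those with $j\le d_1$ or $j>d_2$, for which both factors $\lambda_j+k^2a$ and $\lambda_j+k^2(c-b)$ have the same sign and are uniformly bounded away from zero over the entire admissible region of $(a,b)$, the tail estimate becomes uniform in $b$ and the argument closes. (For summability of the positive part at large $j$ one uses that $(\gamma_j,g)/\lambda_j\in\ell^2$ since $k$ is nonresonant for $q_1$, together with $|\lambda_j+k^2a|\,|\lambda_j+k^2(c-b)|\gtrsim\lambda_j^2$ for large $j$.) With this sign-splitting repair, your proof matches the paper's in strength, and the ``continuity in $a$ of a finite matrix'' heuristic at the end becomes unnecessary.

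Two very small remarks. First, the statement ``positive quantity bounded below by $\mu_2-k^2b>0$'' is a per-$b$ bound; with the sign-splitting repair above you will not need it to be uniform. Second, you restrict the tail bound ``on $V_1$,'' but the bound $\le C\|g\|_{L^2(\p\Omega)}^2$ actually holds for all $g$ (via $\|u_0\|_{H^1}\le C\|g\|_{L^2(\p\Omega)}$), which is slightly cleaner to state.
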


From the previous theorem, we obtain the following special cases where equality is attained in Theorem \ref{thm_monotonicity_dimension}.

\begin{Theorem} \label{thm_monotonicity_dimension_example}
$\phantom{a}$
\begin{enumerate}
\item[(a)]
Let $q \in L^{\infty}(\Omega)$ be such that $0$ is a Neumann eigenvalue of $\Delta + k^2 q$ with multiplicity $N$. For $\eps > 0$ small enough, $d(q+\eps)-d(q-\eps) = N$ and $\Lambda(q+\eps) - \Lambda(q-\eps)$ has exactly $N$ negative eigenvalues.
\item[(b)]
Let $\Omega$ be the square $(0,\pi)^2 \subset \mR^2$, let $k > 0$, and let $N
\geq 2$ be even. There is a $c > 0$ such that for $\eps > 0$ small,
$\Lambda(c+\eps) - \Lambda(c-\eps)$ has exactly $N =
d(c+\eps)-d(c-\eps)$ negative eigenvalues.
\end{enumerate}
\end{Theorem}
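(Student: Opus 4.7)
My plan is to establish part (a) by combining the upper bound from Theorem~\ref{thm_monotonicity_dimension} with the lower bound from Theorem~\ref{thm_monotonicity_dimension_optimal}, and then to deduce part (b) by an explicit number-theoretic choice of $c$ on the square.

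For part (a), let $\eta>0$ denote the distance from $0$ to the set of nonzero Neumann eigenvalues of $\Delta+k^{2}q$. For every $\eps\in(0,\eta/k^{2})$, the Neumann eigenvalues of $\Delta+k^{2}(q\pm\eps)$ are those of $\Delta+k^{2}q$ shifted by $\pm k^{2}\eps$, so the $N$-dimensional zero eigenspace of $\Delta+k^{2}q$ becomes an eigenspace at $\mp k^{2}\eps\neq 0$ while no other eigenvalue crosses zero; in particular $k$ is non-resonant for $q\pm\eps$ and $d(q+\eps)-d(q-\eps)=N$. The upper bound ``at most $N$ negative eigenvalues'' is then immediate from Theorem~\ref{thm_monotonicity_dimension}. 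For the matching lower bound I would fix once and for all some $\eps_{0}\in(0,\eta/k^{2})$ and apply Theorem~\ref{thm_monotonicity_dimension_optimal}(a) to $q_{1}:=q-\eps_{0}$ and $q_{2}:=q+\eps_{0}$. Their difference $2\eps_{0}$ is a positive constant, $d(q_{2})>d(q_{1})$, the largest negative Neumann eigenvalue of $\Delta+k^{2}q_{1}$ is $\mu_{1}=-k^{2}\eps_{0}$ with multiplicity $N_{1}=N$, and the smallest positive Neumann eigenvalue of $\Delta+k^{2}q_{2}$ is $\mu_{2}=k^{2}\eps_{0}$ with multiplicity $N_{2}=N$. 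Theorem~\ref{thm_monotonicity_dimension_optimal}(a) then furnishes a $\delta>0$ such that $\Lambda(q_{2}-b)-\Lambda(q_{1}+a)$ has at least $N$ negative eigenvalues for all $a\in(\eps_{0}-\delta,\eps_{0})$ and all $b\in[0,\eps_{0})$. Setting $a=b=\eps_{0}-\eps$ for $\eps\in(0,\min(\delta,\eps_{0}))$ converts this into the statement that $\Lambda(q+\eps)-\Lambda(q-\eps)$ has at least $N$ negative eigenvalues, which combined with the upper bound yields exactly $N$.

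For part (b), the Neumann eigenfunctions of $-\Delta$ on $(0,\pi)^{2}$ are $\cos(mx)\cos(ny)$ with eigenvalues $m^{2}+n^{2}$, $(m,n)\in\mZ_{\geq 0}^{2}$. Therefore, for a constant potential $c>0$, the multiplicity of $0$ as a Neumann eigenvalue of $\Delta+k^{2}c$ equals the number of representations $k^{2}c=m^{2}+n^{2}$ in $\mZ_{\geq 0}^{2}$. It thus suffices to exhibit, for each even $N\geq 2$, a positive integer $n_{0}$ with exactly $N$ such representations, then set $c=n_{0}/k^{2}$ and invoke part (a). A concrete choice is $n_{0}=5^{N/2-1}\cdot 13$: both $5$ and $13$ are congruent to $1\pmod{4}$, so the classical formula for sums of two squares gives $r_{2}(n_{0})=4\cdot(N/2)\cdot 2=4N$; since $n_{0}$ is not a perfect square (the exponent of $13$ is odd) and is odd, it admits no axis representation $(0,a)$ or diagonal representation $(a,a)$, and each representation with $0<m<n$ contributes $8$ to $r_{2}(n_{0})$ and $2$ ordered pairs to $\mZ_{\geq 0}^{2}$, for a total of $r_{2}(n_{0})/4=N$ pairs.

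The one technical point that needs care is the quantifier order in Theorem~\ref{thm_monotonicity_dimension_optimal}: the ``sufficient closeness'' parameter $\delta$ depends on the reference pair $(q_{1},q_{2})$, so the plan must first select $\eps_{0}$ (and hence $\delta$) and only afterwards let $\eps$ shrink inside $(0,\min(\delta,\eps_{0}))$.
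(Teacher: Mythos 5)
Your proposal is correct and takes essentially the same route as the paper. For part (a), the paper likewise shifts the spectrum, applies Theorem~\ref{thm_monotonicity_dimension} for the upper bound, and invokes Theorem~\ref{thm_monotonicity_dimension_optimal} with $q_1=q-\eps_0$, $q_2=q+\eps_0$, $\mu_1=-k^2\eps_0$, $\mu_2=k^2\eps_0$; your explicit attention to the quantifier order (fix $\eps_0$ first, then let $\eps$ shrink) is a correct reading of what the paper's terser wording intends. For part (b), the paper chooses $\lambda=5^{N-1}$ while you choose $5^{N/2-1}\cdot 13$, but both are odd non-squares all of whose prime factors are $\equiv 1\pmod 4$ with $r_2=4N$, so the two choices are interchangeable instances of the same number-theoretic argument.
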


Let us give some more references to earlier and related work, and comment on the relevance of our results. Monotonicity estimates and localized potentials techniques  have been used in different ways for the study of inverse problems \cite{harrach2009uniqueness,harrach2010exact,harrach2012simultaneous,arnold2013unique,harrach2013monotonicity,barth2017detecting,harrach2017local,brander2018monotonicity,griesmaier2018monotonicity,harrach2018helmholtz,harrach2018fractional,harrach2018localizing} and several recent works build practical reconstruction methods on monotonicity properties \cite{tamburrino2002new,harrach2015combining,harrach2015resolution,harrach2016enhancing,maffucci2016novel,tamburrino2016monotonicity,garde2017comparison,garde2017convergence,su2017monotonicity,ventre2017design,harrach2018monotonicity,zhou2018monotonicity,garde2019regularized}. Recently, monotonicity arguments were also discovered to yield Lipschitz stability results, cf.~\cite{harrach2019global,seo2018learning,harrach2019uniqueness}. All of these works consider stationary imaging cases where monotonicity of the ND operators holds in the sense of the Loewner order as explained above. So far, only \cite{harrach2018helmholtz,griesmaier2018monotonicity,harrach2019fractional} cover
the case of positive frequency imaging where the monotonicity only holds up to a finite dimensional space. For extending monotonicity-based theoretical uniqueness and stability results, as well as monotonicity-based numerical reconstruction methods, it seems to be of utmost importance to have a good bound on the number of eigenvalues that have to be disregarded. \cite{harrach2018helmholtz} showed that this number is smaller than $d(q_2)$, which might become arbitrarily large for high frequencies $k\to \infty$. Using this bound would result in disregarding a large part of the ND operators for high frequencies, and might make numerical reconstruction methods unfeasible. This article, however, shows that the number is smaller that $d(q_2)-d(q_1)$ which might still be small (or even zero) for high frequencies. Note also, that this article indicates that the bound is sharp for $q_1$ close to $q_2$, but that 
the bound might get too large when $q_2-q_1$ increases, cf.~section~\ref{sec_proof_constant}.

The rest of this paper is organized as follows. Section \ref{sec_proof1} gives the proof of Theorem \ref{thm_monotonicity_dimension}, and Section \ref{sec_proof2} proves Theorems \ref{thm_monotonicity_dimension_optimal} and \ref{thm_monotonicity_dimension_example}. Section \ref{sec_proof_constant} gives an simple alternative proof of Theorem \ref{thm_monotonicity_dimension} for the case where $q_1$ and $q_2$ are constant,
and numerically studies the sharpness of the bound for large $q_2-q_1$.

\subsection*{Acknowledgements}
M.S.\ was supported by the Academy of Finland (Centre of Excellence in Inverse Modelling and Imaging, grant numbers 312121 and 309963) and by the European Research Council under Horizon 2020 (ERC CoG 770924).


\section{Upper bound for the number of negative eigenvalues} \label{sec_proof1}

For the proof of Theorem \ref{thm_monotonicity_dimension}, it will be useful to consider solutions of the Helmholtz equation also when $k$ is a resonant frequency. In this case the Neumann data needs to satisfy finitely many linear constraints.

\begin{Lemma} \label{lemma_ndmap_general}
Let $q \in L^{\infty}(\Omega)$ and $k > 0$, and define the sets 
\begin{align*}
N(q) &= \{ \varphi \in H^1(\Omega) \,;\, (\Delta + k^2 q) \varphi = 0 \text{ in
$\Omega$}, \ \partial_{\nu} \varphi|_{\p\Omega} = 0 \}, \\
D(q) &= \{ \varphi|_{\partial \Omega} \,;\, \varphi \in N(q) \}
\end{align*}
Also let $N(q)^{\perp}$ be the orthogonal complement of $N(q)$ in $L^2(\Omega)$, and
let $D(q)^{\perp}$ be the orthogonal complement of $D(q)$ in $L^2(\p \Omega)$.
\begin{enumerate}
\item[(a)] 
$N(q)$ and $D(q)$ are finite-dimensional spaces whose dimension is the
multiplicity of $0$ as the Neumann eigenvalue of $\Delta + k^2 q$ in $\Omega$.
\item[(b)] 
For any $F \in L^2(\Omega)$ and $g \in L^2(\p \Omega)$, the equation 
\begin{equation} \label{helmholtz_f_g}
(\Delta + k^2 q) u = F \text{ in $\Omega$}, \qquad \p_{\nu} u|_{\p \Omega} = g,
\end{equation}
has a solution $u \in H^1(\Omega)$ if and only if one has the compatibility conditions 
\begin{equation} \label{helmholtz_compatibility_conditions}
\int_{\Omega} F \varphi \,dx = \int_{\p \Omega} g \varphi \,dS, \qquad \varphi \in N(q).
\end{equation}
In particular, a solution exists whenever $F \in N(q)^{\perp}$ and $g \in
D(q)^{\perp}$. The solution is unique up to addition of a function in $N(q)$,
and one has a bounded map 
\[
T_q: N(q)^{\perp} \times D(q)^{\perp} \to H^1(\Omega), \ \ (F,g) \mapsto u_{F,g}
\]
where $u_{F,g}$ is the unique solution with $(u, \varphi)_{H^1(\Omega)} = 0$ for $\varphi \in N(q)$.
\item[(c)] 
Let $\lambda_1 \geq \lambda_2 \geq \lambda_3 \geq \ldots \to -\infty$ be the
Neumann eigenvalues of $\Delta + k^2 q$ in $\Omega$ and let
$(\varphi_j)_{j=1}^{\infty}$ be a corresponding orthonormal basis of
$L^2(\Omega)$ consisting of Neumann eigenfunctions. If $F \in L^2(\Omega)$ and
$g \in L^2(\p \Omega)$ satisfy \eqref{helmholtz_compatibility_conditions}, then
any solution $u \in H^1(\Omega)$ of \eqref{helmholtz_f_g} may be represented as
the $L^2(\Omega)$-convergent sum 
\[
u = \sum_{j \in J} a_j \varphi_j + \sum_{j \notin J} \frac{1}{\lambda_j} \left[ \int_{\Omega} F \varphi_j \,dx - \int_{\p \Omega} g \varphi_j \,dS \right] \varphi_j,
\]
where $J = \{j \geq 1 \,;\, \lambda_j = 0 \}$ is finite, and $a_j \in \mR$ are some constants.
\end{enumerate}
\end{Lemma}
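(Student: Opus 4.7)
The plan is to handle the three parts in sequence, using the Fredholm structure of the Neumann realization of $\Delta + k^2 q$.

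For (a), I would observe that $N(q)$ is precisely the zero eigenspace of the self-adjoint Neumann operator $\Delta + k^2 q$ on $L^2(\Omega)$; this operator has compact resolvent by standard elliptic theory (shifting by a large constant and using the compactness of $H^1(\Omega) \hookrightarrow L^2(\Omega)$), so $N(q)$ is finite-dimensional with dimension equal to the multiplicity of $0$. To get $\dim D(q) = \dim N(q)$, I need the trace map $\varphi \mapsto \varphi|_{\p\Omega}$ to be injective on $N(q)$: if $\varphi \in N(q)$ vanishes on $\p\Omega$, then $\varphi$ has zero Cauchy data, and unique continuation for $\Delta + k^2 q$ with an $L^\infty$ potential forces $\varphi \equiv 0$.

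For (b), necessity of \eqref{helmholtz_compatibility_conditions} follows from Green's identity: if $u$ solves \eqref{helmholtz_f_g} and $\varphi \in N(q)$, pairing $(\Delta + k^2 q)u$ against $\varphi$ and integrating by parts twice yields $\int_\Omega F\varphi\,dx = \int_{\p\Omega} g\varphi\,dS$. For sufficiency I would pass to the weak formulation: $u$ solves \eqref{helmholtz_f_g} iff the symmetric bilinear form $B(u,v) = \int_\Omega (\nabla u \cdot \nabla v - k^2 q u v)\,dx$ equals the linear functional $\ell(v) = \int_{\p\Omega} gv\,dS - \int_\Omega Fv\,dx$ for every $v \in H^1(\Omega)$. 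The induced map $A: H^1(\Omega) \to H^1(\Omega)^*$ splits as $A_0 - K$, with $A_0$ (the form $\int(\nabla u \cdot \nabla v + uv)$) an isomorphism by Lax--Milgram and $K$ compact by Rellich, so $A$ is Fredholm of index $0$. Symmetry of $B$ then gives that the range of $A$ is the annihilator of $\ker A = N(q)$, which in the present duality is exactly the set of functionals $\ell$ whose data $(F,g)$ obey \eqref{helmholtz_compatibility_conditions}. Uniqueness mod $N(q)$ is automatic, and inverting $A$ on a complement of $N(q)$ followed by the $H^1$-orthogonal projection onto $N(q)^\perp$ yields the bounded map $T_q$.

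For (c), I would test the equation against each eigenfunction using Green's identity and $\p_\nu \varphi_j = 0$:
\[
\int_\Omega F\varphi_j\,dx - \int_{\p\Omega} g\varphi_j\,dS = \int_\Omega u (\Delta + k^2 q)\varphi_j\,dx = \lambda_j (u,\varphi_j)_{L^2(\Omega)}.
\]
For $j \notin J$ this yields the claimed Fourier coefficient; for $j \in J$ both sides vanish (the left by \eqref{helmholtz_compatibility_conditions}), so $(u,\varphi_j)$ is a free scalar $a_j$. Expansion of $u$ in the orthonormal basis $(\varphi_j)$ then gives the representation. The main obstacle is the Fredholm step in (b), specifically identifying the abstract range of $A$ with the concrete compatibility condition; the unique continuation used in (a) is standard for $L^\infty$ potentials but is the only non-trivial PDE input beyond the functional-analytic machinery.
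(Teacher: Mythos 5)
Your proof is correct and follows essentially the same route as the paper: Fredholm theory applied to the weak formulation of the Neumann problem, with the compatibility conditions arising from the kernel/range duality, unique continuation to get $\dim D(q) = \dim N(q)$, and testing against Neumann eigenfunctions via Green's identity for (c). The only cosmetic difference is that you phrase the Fredholm step in $H^1(\Omega)^*$ through the bilinear form $B$, while the paper pulls the equation back to $H^1(\Omega)$ as $(\mathrm{Id} - K - k^2 K_q)u = -\iota^* F + \gamma^* g$ and invokes a textbook corollary; these are the same argument modulo a Riesz identification.
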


\begin{RemarkNoNumber}
The sum in part (c) may not converge in higher norms in general. In fact, if it did converge in some space where the normal derivative operator is bounded, then one would get that $\p_{\nu} u|_{\p \Omega} = 0$, which is not true if $g \neq 0$.
\end{RemarkNoNumber}

\begin{proof}
As in \cite[Section 2.1]{harrach2018helmholtz} we use the compact inclusion map $\iota: H^1(\Omega) \to
L^2(\Omega)$, to define $K = \iota^* \iota$ and $K_q =
\iota^* M_q \iota$, where $M_q: L^2(\Omega) \to L^2(\Omega)$ is the multiplication operator by $q$.
Both $K$ and $K_q$ are compact self-adjoint operators from $H^1(\Omega)$ to
$H^1(\Omega)$. A function $u \in H^1(\Omega)$ is a weak solution of
\eqref{helmholtz_f_g} if and only if 
\[
(\mathrm{Id} - K - k^2 K_q)u = -\iota^* F + \gamma^* g
\]
where $\gamma: H^1(\Omega) \to L^2(\partial \Omega)$ is the trace operator. By Fredholm theory (see e.g.\ \cite[Corollary 8.95]{renardy2004introduction}), this problem has a solution $u \in H^1(\Omega)$ for given $F \in L^2(\Omega)$, $g \in L^2(\partial \Omega)$ if and only if 
\[
(-\iota^* F + \gamma^* g, \varphi)_{H^1(\Omega)} = 0
\]
for all $\varphi \in H^1(\Omega)$ in the kernel of $\mathrm{Id} - K - k^2 K_q$. But this finite dimensional kernel is equal to $N(q)$, showing that \eqref{helmholtz_f_g} is solvable if and only if \eqref{helmholtz_compatibility_conditions} holds. The representation in \cite[Corollary 8.95]{renardy2004introduction} shows that there is a unique solution $u_{F,g}$ with $(u_{F,g}, \varphi)_{H^1(\Omega)} = 0$ for $\varphi \in N(q)$, and that 
\[
\norm{u_{F,g}}_{H^1(\Omega)} \leq C \norm{-\iota^* F + \gamma^* g}_{H^1(\Omega)} \leq C(\norm{F}_{L^2(\Omega)} + \norm{g}_{L^2(\p \Omega)}).
\]
Finally, the map $N(q) \to D(q)$, $\varphi \mapsto \varphi|_{\partial \Omega}$ is bijective by the unique continuation principle. This proves (a) and (b).

To prove (c) let $u$ be a solution of \eqref{helmholtz_f_g}. Since $(\varphi_j)$ is an orthonormal basis of $L^2(\Omega)$ we have 
\[
u = \sum_{j=1}^{\infty} c_j \varphi_j, \qquad c_j =  \int_{\Omega} u \varphi_j \,dx,
\]
with convergence in $L^2(\Omega)$. Testing the weak form of \eqref{helmholtz_f_g} against $\varphi_j$ and integrating by parts gives that 
\begin{align}
\int_{\Omega} F \varphi_j \,dx &= ( (\Delta+k^2 q)u, \varphi_j )_{L^2(\Omega)} = \int_{\p \Omega} g \varphi_j \,dS + (u, (\Delta+k^2 q) \varphi_j)_{L^2(\Omega)} \notag \\
 &=  \int_{\p \Omega} g \varphi_j \,dS + \lambda_j \int_{\Omega} u \varphi_j. \notag 
\end{align}
This yields the representation for $u$ in (c).
\end{proof}

For $q_1, q_2 \in L^{\infty}(\Omega) \setminus \{0\}$, we define 
\[
q(t) : = q_1 + t(q_2-q_1), \qquad t \in [0,1].
\]
We also define the family of  operators
\[
H(t) := \Delta + k^2 q(t),  \qquad t \in [0,1].
\]
The following result from analytic perturbation theory is needed to describe the behaviour of the eigenvalues of $H(t)$ as $t$ changes.

\begin{Lemma} \label{ev_pert}
Let $q_1, q_2 \in L^{\infty}(\Omega)$ and $k > 0$, assume that $q_1 \leq q_2$ and $q_1 \not\equiv q_2$, and let $\lambda_1 \geq \lambda_2 \geq \ldots \to -\infty$ be the Neumann eigenvalues of $\Delta + k^2 q_1$ in $\Omega$. There exist real-analytic functions $\lambda_j: [0,1] \to \mR$ and $\varphi_j: [0,1] \to L^2(\Omega)$ with the following properties:
\begin{enumerate}
\item[(a)] 
$\lambda_j(0) = \lambda_j$ for $j \geq 1$, and for any $t \in [0,1]$, the
numbers $\lambda_j(t)$ represent the repeated\footnote{We call 
$\lambda_1 \geq \lambda_2 \geq \ldots$ the repeated eigenvalues iff
the value $\lambda_j$ is repeated the times of its multiplicity in the sequence.}
Neumann eigenvalues of $H(t)$ in
$\Omega$. Zero is a Neumann eigenvalue of $H(t_0)$ with multiplicity $N$ if and
only if precisely $N$ functions $\lambda_j(t)$ vanish at $t=t_0$.
\item[(b)]
Each $\lambda_j(t)$ is strictly increasing on $[0,1]$.
\item[(c)] 
Each $\varphi_j(t)$ is a Neumann eigenfunction in $H^1(\Omega)$ satisfying 
\[
H(t) \varphi_j(t) = \lambda_j(t) \varphi_j(t) \text{ in $\Omega$},\qquad \p_{\nu} \varphi_j(t)|_{\p \Omega} = 0,
\]
and $(\varphi_j(t))_{j=1}^{\infty}$ is an orthonormal basis of $L^2(\Omega)$
for any $t \in [0,1]$. Each map $[0,1] \to H^1(\Omega), \ t \mapsto
\varphi_j(t)$ is real-analytic.
\end{enumerate}
\end{Lemma}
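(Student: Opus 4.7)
The plan is to realize the family $H(t) = \Delta + k^2 q(t)$ as a self-adjoint holomorphic family of type (B) in Kato's sense, apply the Kato--Rellich theorem on global real-analytic parametrization of eigenvalues over a real interval, and then establish strict monotonicity via the Feynman--Hellmann formula combined with unique continuation.

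First I would introduce the quadratic form
\[
a_t(u,v) = \int_\Omega \nabla u \cdot \nabla \overline{v}\, dx - k^2 \int_\Omega q(t)\, u \overline{v}\, dx
\]
with form domain $H^1(\Omega)$, independent of $t$. Since $q(t) = q_1 + t(q_2-q_1)$ is affine in $t$ and $q_1,q_2 \in L^\infty(\Omega)$, this form extends holomorphically in $t$ to a complex neighborhood of $[0,1]$, is closed, and is uniformly bounded below modulo a multiple of $\norm{u}_{L^2}^2$. The associated self-adjoint Neumann operators $-H(t)$ therefore form a self-adjoint holomorphic family of type (B), with compact resolvent because $H^1(\Omega) \hookrightarrow L^2(\Omega)$ is compact. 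By Rellich's theorem on self-adjoint holomorphic families over a real interval, there exist real-analytic functions $\mu_j\colon [0,1]\to \mR$ and real-analytic $H^1(\Omega)$-valued maps $\psi_j$ such that $\{\mu_j(t)\}_{j\geq 1}$ enumerates the repeated eigenvalues of $-H(t)$ and $\{\psi_j(t)\}$ is an $L^2(\Omega)$-orthonormal basis of eigenfunctions; as $H(t)$ has real coefficients, the $\psi_j(t)$ may be chosen real-valued. Setting $\lambda_j(t) = -\mu_j(t)$ and relabeling so that $\lambda_j(0) = \lambda_j$ yields the data of parts (a) and (c).

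For strict monotonicity, I would differentiate $H(t)\varphi_j(t) = \lambda_j(t)\varphi_j(t)$ in $t$, pair with $\varphi_j(t)$, integrate by parts using $\p_\nu \varphi_j(t)|_{\p\Omega}=0$ and $\norm{\varphi_j(t)}_{L^2}=1$, to obtain the Feynman--Hellmann identity
\[
\lambda_j'(t) = k^2 \int_\Omega (q_2-q_1)\,\varphi_j(t)^2\, dx.
\]
Since $q_2-q_1 \geq 0$ and $q_2 \not\equiv q_1$, the set $E = \{q_2 > q_1\}$ has positive Lebesgue measure, so $\lambda_j'(t) \geq 0$. If equality held at some $t$, then $\varphi_j(t)$ would vanish a.e.\ on $E$; but $\varphi_j(t)$ solves $(\Delta + V)\varphi_j(t)=0$ with $V = k^2 q(t) - \lambda_j(t) \in L^\infty(\Omega)$, so the strong unique continuation principle forces $\varphi_j(t) \equiv 0$, contradicting normalization. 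The final clause of (a) then follows from strict monotonicity and analyticity: each $\lambda_j$ has at most one zero in $[0,1]$, so $0$ is a Neumann eigenvalue of $H(t_0)$ of multiplicity $N$ exactly when precisely $N$ of the functions $\lambda_j$ vanish at $t_0$.

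The main obstacle is arranging the real-analytic parametrization to be \emph{global} on $[0,1]$ rather than only local. Near any fixed $t_0$, holomorphic functional calculus with Riesz projections produces analytic branches, but across eigenvalue crossings the natural decreasing-order enumeration is only continuous, not analytic. The key input is Rellich's global theorem for self-adjoint holomorphic families on a real interval, which guarantees that the branches can be continued analytically across crossings provided one is willing to relinquish the decreasing-order enumeration; this is exactly what part (a) allows.
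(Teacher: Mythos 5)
Your approach is fundamentally correct and rests on the same pillar as the paper---Rellich/Kato analytic perturbation theory for self-adjoint holomorphic families with compact resolvent, followed by the Feynman--Hellmann identity and unique continuation---but you realize the family differently. The paper works with a type (A) family: it fixes an operator domain
\[
\mathcal{D} = \{ u \in H^1(\Omega) \,;\, \Delta u \in L^2(\Omega), \ \tilde\gamma_N u = 0 \},
\]
using the weak Neumann trace of Gesztesy--Mitrea, and verifies closedness, holomorphy of type (A), self-adjointness, and compactness of the resolvent through that machinery. You instead use the sesquilinear form $a_t$ with fixed form domain $H^1(\Omega)$, which gives a type (B) family directly; this avoids the technical discussion of the operator domain on a merely Lipschitz boundary and is arguably the cleaner route, since compactness of the resolvent falls out immediately from $H^1(\Omega) \hookrightarrow\hookrightarrow L^2(\Omega)$. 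Both routes feed into the same Kato Theorem VII.3.9 (or its form analogue) and Rellich's global theorem for one-parameter self-adjoint analytic families.

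There is, however, one genuine gap. Rellich's theorem gives you real-analytic $L^2(\Omega)$-valued eigenfunction branches $\varphi_j$; it does not, by itself, give you real-analyticity of $t \mapsto \varphi_j(t)$ as an $H^1(\Omega)$-valued map, which is what part (c) requires. You assert this without argument. The paper proves it as a separate step: for $\psi \in H^1(\Omega)$ one writes
\[
(\varphi_j(t), \psi)_{H^1} = (\varphi_j(t), \psi)_{L^2} + k^2 (q(t)\varphi_j(t), \psi)_{L^2} - \lambda_j(t)(\varphi_j(t), \psi)_{L^2},
\]
notes that each term is real-analytic in $t$ (being built from the already-known $L^2$-analytic quantities), and then invokes the equivalence of weak and strong analyticity in a Banach space to upgrade to $H^1$-analyticity. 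You should supply this argument or an equivalent one; in the type (B) setting you could alternatively argue via the form norm, but the claim still needs a proof. Two smaller remarks: the unique continuation you invoke is unique continuation from a set of positive measure (for Schr\"odinger operators with $L^\infty$ potential), not the strong unique continuation principle at a point; and the final clause of part (a) is an immediate consequence of the definition of ``repeated eigenvalues'' and does not in fact need strict monotonicity or the ``at most one zero'' observation, though that observation is useful elsewhere in the paper.
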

\begin{proof}
This result will be proved by analytic perturbation theory, and hence in this
proof we will assume the function spaces to be complex valued.

(a) For $z \in \mC$, define the operator 
\[
H(z) = \Delta + k^2 (q_1 + z(q_2-q_1)).
\]
Then $H(t) = \Delta + k^2 q(t)$ for $t \in \mR$. We wish to use \cite[Theorem VII.3.9 on p.\ 392]{kato1995perturbation} to show that the Neumann eigenvalues of $H(t)$ can be parametrized analytically with respect to $t$ (see \cite[Theorem 3.1, p.\ 442]{berezin2012schrodinger} and \cite[Theorem XII.13]{reed1978methods} for related results). In order to do this, we need to realize $H(z)$ with Neumann boundary values as a self-adjoint analytic family of unbounded operators on $L^2(\Omega)$. In the present case where $\Omega$ has Lipschitz boundary, the required results may be found in \cite[Section 2]{gesztesy2008robin} (in fact the easier abstract results in \cite[Appendix B]{gesztesy2008robin} would suffice).

Define the set 
\[
\mathcal{D} = \{ u \in H^1(\Omega) \,;\, \Delta u \in L^2(\Omega), \ \tilde{\gamma}_N u = 0 \text{ in $H^{-1/2}(\partial \Omega)$} \}
\]
where $\tilde{\gamma}_N$ is the weak Neumann trace operator in \cite[formula
(2.40) and (2.41)]{gesztesy2008robin}. We consider $H(z)$ as an unbounded linear operator on
$L^2(\Omega)$ with domain $\mathrm{dom}(H(z)) = \mathcal{D}$. The family
$(H(z))_{z \in \mC}$ has the following properties:
\begin{enumerate}
\item[(i)] 
Each $H(z)$ is closed and densely defined. This follows since $\Delta$ with domain $\mathcal{D}$ is self-adjoint by \cite[Theorem 2.6]{gesztesy2008robin}, hence $\Delta$ and consequently also $H(z)$ is closed and densely defined.
\item[(ii)] 
The family $(H(z))_{z \in \mC}$ is holomorphic of type (A) (see \cite[Section VII.2.1]{kato1995perturbation}) since for each $u \in \mathcal{D}$, the map 
\[
z \mapsto H(z)u = \Delta u + k^2 q_1 u + k^2 z(q_2-q_1) u
\]
is holomorphic.
\item[(iii)] 
The family $(H(z))_{z \in \mC}$ is a self-adjoint holomorphic family,
i.e. a holomorphic family of operators satisfying $H(z)^{*} = H(\bar z)$
(see \cite[Section
VII.3.1]{kato1995perturbation}): since $\Delta$ with domain $\mathcal{D}$ is
self-adjoint \cite[Theorem 2.6]{gesztesy2008robin} and the map $B(z): u \mapsto
k^2 (q_1 + z (q_2-q_1)) u$ is bounded on $L^2(\Omega)$, by \cite[Lemma
XII.1.6]{dunford1967linear} we have 
\begin{align*}
H(z)^* = \Delta^* + B(z)^* = \Delta + B(\bar{z}) = H(\bar{z}).
\end{align*}
\item[(iv)]
$H(z)$ has compact resolvent, when $z \in \mC$.
This can be seen as follows. Let 
$$
R_z(\zeta) := (H(z)-\zeta)^{-1},
$$
denote the resolvent. Arguing as in the proof of \cite[Corollary 2.7]{gesztesy2008robin} 
using \cite[Remark 2.19]{gesztesy2008robin}, we have that
$$
R_z(\zeta_0) := (H(z)-\zeta_0)^{-1},
$$
is compact, when $\zeta_0 \in \R_+$ is large enough. 
By the resolvent identity in \cite[Theorem VIII.2]{RSI} we have that
\begin{align*} 
R_z(\zeta) = R_z(\zeta_0) -(\zeta_0 - \zeta) R_z(\zeta)R_z(\zeta_0).
\end{align*}
The resolvent
$R_z(\zeta)$ is by definition continuous on $L^2(\Omega)$, when $\zeta$ is in the resolvent set.
The above formula implies hence that $R_z(\zeta)$ is compact on $L^2(\Omega)$, since 
$R_z(\zeta_0)$ is compact.

\end{enumerate}

Thus the family $(H(z))_{z \in \mC}$ satisfies the conditions in \cite[Theorem VII.3.9 on p.\ 392]{kato1995perturbation}, and there are real-analytic functions $\lambda_j(t)$ and real-analytic vector functions $\varphi_j(t)$, for $t \in [0,1]$, such that $\lambda_j(t)$ represent all the repeated eigenvalues of $H(t)$, $\varphi_j(t)$ are the corresponding eigenfunctions, and $(\varphi_j(t) )$ is an orthonormal basis of $L^2(\Omega)$. Since $\varphi_j(t) \in \mathcal{D}$, these are exactly the standard Neumann eigenvalues and eigenfunctions of $H(t)$ (see \cite[formula (2.41)]{gesztesy2008robin}). We may reorder $\lambda_j(t)$ and $\varphi_j(t)$ so that $\lambda_j(0) = \lambda_j$.

(b) We compute $\lambda_j'(t)$ using a variational formula: by \cite[formula
(VII.3.18), p.\ 391]{kato1995perturbation} and by the fact that $H'(t)u = k^2
(q_2-q_1) u$, we have 
\begin{equation} \label{lambdaj_derivative_formula}
\lambda_j'(t) = (H'(t) \varphi_j(t), \varphi_j(t))_{L^2(\Omega)} = \int_{\Omega} k^2(q_2-q_1) \varphi_j(t)^2 \,dx.
\end{equation}
Using the assumption that $q_2 \geq q_1$, we have $\lambda_j'(t) \geq 0$. Moreover, since $q_1 \not\equiv q_2$, we have $q_2 - q_1 \geq c > 0$ in some set $E$ of positive measure in $\Omega$. Thus we see that $\lambda_j'(t) > 0$ (otherwise if $\lambda_j'(t) = 0$, then $\int_E \varphi_j(t)^2 \,dx = 0$ which would contradict the unique continuation principle). This implies that each $\lambda_j(t)$ is a strictly increasing function on $[0,1]$.

(c) All other statements in (c) have been proved, except that $t \mapsto \varphi_j(t)$ is real-analytic as a $H^1(\Omega)$-valued function. To prove this, note first that for any $v \in L^2(\Omega)$ the map 
\[
t \mapsto (\varphi_j(t), v)_{L^2(\Omega)}
\]
is real-analytic on $[0,1]$. Now, if $\psi \in H^1(\Omega)$, we compute 
\begin{multline*}
(\varphi_j(t), \psi)_{H^1(\Omega)} = (\varphi_j(t), \psi)_{L^2(\Omega)} - (\Delta \varphi_j(t), \psi)_{L^2(\Omega)} \\
 = (\varphi_j(t), \psi)_{L^2(\Omega)} + k^2 (q(t) \varphi_j(t), \psi)_{L^2(\Omega)} - \lambda_j(t) (\varphi_j(t), \psi)_{L^2(\Omega)}.
\end{multline*}
Each term on the last line is real-analytic for $t \in [0,1]$. Thus $t \mapsto \varphi_j(t)$ is weakly, and hence strongly, analytic as a $H^1(\Omega)$-valued function.
\end{proof}

We will next combine Lemmas \ref{lemma_ndmap_general} and \ref{ev_pert} to obtain solutions of 
\begin{equation} \label{helmholtz_qt_equation}
(\Delta + k^2 q(t)) u_t = 0 \text{ in $\Omega$}, \qquad \p_{\nu} u_t|_{\p \Omega} = g
\end{equation}
that depend Lipschitz continuously on $t \in [0,1]$ as long as $g$ is orthogonal to a finite-dimensional subspace of $L^2(\p \Omega)$.

\begin{Lemma} \label{lemma_ut_properties}
Assume the conditions in Lemma \ref{ev_pert}. Let $t_1 < \ldots < t_K$ be the times when $0$ is a Neumann eigenvalue of $H(t)$, let $g \in L^2(\p \Omega)$, and let $u_t$ be the unique solution of \eqref{helmholtz_qt_equation} for $t \in [0,1] \setminus \{t_1, \ldots, t_K\}$.
\begin{enumerate}
\item[(a)] 
The map 
\[
[0,1] \setminus \{t_1, \ldots, t_K\} \to H^1(\Omega), \ \ t \mapsto u_t
\]
is real-analytic. The derivative $\partial_t u_t$ is the unique $H^1(\Omega)$ solution of $(\Delta + k^2 q(t)) v = -k^2 q'(t) u_t$ in $\Omega$ with $\p_{\nu} v|_{\p \Omega} = 0$. For any compact $F \subset [0,1] \setminus \{t_1, \ldots, t_K\}$, there is $C_F > 0$ such that 
\[
\norm{u_t}_{H^1(\Omega)} \leq C_F \norm{g}_{L^2(\p \Omega)}, \qquad t \in F.
\]
\item[(b)] 
Let $t_0$ be one of $t_1, \ldots, t_K$ and let $I = \{j \geq 1 \,;\, \lambda_j(t_0) = 0 \}$. Then 
\[
\norm{ u_t - \sum_{j \in I} (u_t, \varphi_j(t))_{L^2(\Omega)} \varphi_j(t) }_{H^1(\Omega)} \leq C \norm{g}_{L^2(\p \Omega)}
\]
uniformly for $t$ close to $t_0$.
\item[(c)] 
If $J = \{ j \geq 1 \,;\, \lambda_j(t_l) = 0 \text{ for some $l \in \{1, 2, \ldots, K \}$} \}$, then 
\[
\norm{ u_t - \sum_{j \in J} (u_t, \varphi_j(t))_{L^2(\Omega)} \varphi_j(t) }_{H^1(\Omega)} \leq C \norm{g}_{L^2(\p \Omega)}
\]
uniformly over $t \in [0,1] \setminus \{ t_1, \ldots, t_K \}$.
\item[(d)] 
With the notation of Lemma \ref{lemma_ndmap_general} let 
\[
D = D(q(t_1)) \oplus \ldots \oplus D(q(t_K)).
\]
If additionally $g \in D^{\perp}$, then $t \mapsto u_t$ extends uniquely as a Lipschitz continuous map 
\[
[0,1] \to H^1(\Omega), \ \ t \mapsto u_t.
\]
\end{enumerate}
\end{Lemma}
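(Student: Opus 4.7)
For (a), the plan is to recast the problem through the Fredholm operator $A(t) := \id - K - k^2 K_{q(t)}$ on $H^1(\Omega)$ from the proof of Lemma \ref{lemma_ndmap_general}, via $A(t) u_t = \gamma^* g$. Since $q(t)$ is affine in $t$, the family $A(t)$ is a polynomial in $t$ and is invertible precisely on $[0,1]\setminus\{t_1,\dots,t_K\}$; hence $t \mapsto A(t)^{-1}$ is real-analytic and locally uniformly bounded there, which gives both the analyticity statement and the bound on compact $F$. The derivative formula follows from differentiating $(\Delta + k^2 q(t)) u_t = 0$ and $\p_{\nu} u_t|_{\p\Omega} = g$ in $t$.

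For (b), which is the main technical step, I would introduce a lifting $v \in H^1(\Omega)$ by solving $-\Delta v + v = 0$ in $\Omega$ with $\p_{\nu} v|_{\p \Omega} = g$, so that $\|v\|_{H^1} \leq C \|g\|_{L^2(\p\Omega)}$ and $\Delta v = v \in L^2(\Omega)$. Setting $z_t := u_t - v$, the function $z_t$ has homogeneous Neumann data and solves $(\Delta + k^2 q(t)) z_t = G_t$ with $G_t := -\Delta v - k^2 q(t) v \in L^2(\Omega)$, $\|G_t\|_{L^2} \leq C \|g\|_{L^2(\p\Omega)}$. Expanding in the eigenbasis gives $z_t = \sum_j \lambda_j(t)^{-1} (G_t,\varphi_j(t))_{L^2}\varphi_j(t)$ in $L^2(\Omega)$, and the $L^2$-orthogonal projection $P_t$ onto $V_t := \operatorname{span}\{\varphi_j(t) : j \in I\}$ isolates exactly the singular contributions, namely those with $\lambda_j(t) \to 0$ as $t \to t_0$. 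By Lemma \ref{ev_pert}, $|\lambda_j(t)| \geq c > 0$ uniformly in $j \notin I$ and $t$ near $t_0$, so $\|z_t - P_t z_t\|_{L^2} \leq c^{-1}\|G_t\|_{L^2} \leq C\|g\|_{L^2(\p\Omega)}$; the remainder also solves $(\Delta + k^2 q(t))(z_t - P_t z_t) = G_t - P_t G_t$ with zero Neumann data, and a standard Dirichlet-energy identity promotes the $L^2$ bound to $H^1$. Since the finite-rank $P_t$ is uniformly bounded $L^2(\Omega) \to H^1(\Omega)$ by the $H^1$-analyticity of the $\varphi_j(t)$ (Lemma \ref{ev_pert}(c)), also $\|v - P_t v\|_{H^1} \leq C\|g\|_{L^2(\p\Omega)}$, and combining gives (b). The chief obstacle is handling the nonzero boundary data; the lifting trick reduces it to a purely interior problem where the spectral decomposition applies cleanly.

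For (c), cover $[0,1]\setminus\{t_1,\dots,t_K\}$ by a compact set bounded away from all $t_l$ (where (a) yields a uniform bound) together with small neighborhoods of each $t_l$ (where (b) applies). Writing $I_l := \{j : \lambda_j(t_l) = 0\}$, in the neighborhood of $t_l$ the projection from (b) subtracts only $\sum_{j \in I_l} (u_t,\varphi_j(t))_{L^2}\varphi_j(t)$, whereas $J = I_1 \cup \dots \cup I_K$; the finitely many extra terms $(u_t,\varphi_j(t))_{L^2}\varphi_j(t)$ with $j \in J \setminus I_l$ are uniformly bounded in $H^1$, since $|\lambda_j(t)|$ is bounded below uniformly for such $j$ on the neighborhood and thus $|(u_t,\varphi_j(t))_{L^2}| = |\lambda_j(t)|^{-1}|\int_{\p\Omega} g\varphi_j(t)\,dS| \leq C\|g\|_{L^2(\p\Omega)}$.

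For (d), the assumption $g \in D^\perp$ forces $\int_{\p\Omega} g\varphi_j(t_l)\,dS = 0$ for every $j \in I_l$. Near $t_l$ the coefficient $(u_t,\varphi_j(t))_{L^2} = -\lambda_j(t)^{-1}\int_{\p\Omega} g\varphi_j(t)\,dS$ extends to a real-analytic function of $t$: by Lemma \ref{ev_pert} both numerator and denominator are real-analytic with simple zeros at $t_l$ (the numerator by orthogonality, the denominator because $\lambda_j'(t_l) > 0$), so the singularity is removable. Combined with (c) this shows $u_t$ is uniformly bounded in $H^1(\Omega)$ on $[0,1]\setminus\{t_1,\dots,t_K\}$ and admits a continuous extension at each $t_l$. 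To upgrade to Lipschitz, apply (a) to see that $\p_t u_t$ solves the analogous Neumann problem with zero boundary data and interior source $-k^2 (q_2-q_1) u_t$, whose $L^2$ norm is uniformly bounded by the extension just obtained. The regular-part estimate of (b)/(c) adapted to this inhomogeneous interior problem, together with bounds on the singular coefficients $(\p_t u_t,\varphi_j(t))_{L^2}$ for $j \in I_l$ (obtained by differentiating the identity $(u_t,\varphi_j(t))_{L^2} = h_j(t)$ with $h_j$ the analytic extension above), gives $\|\p_t u_t\|_{H^1} \leq C$ uniformly on $[0,1]\setminus\{t_1,\dots,t_K\}$, which combined with continuity at the $t_l$'s yields the claimed Lipschitz extension.
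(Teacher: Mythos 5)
Your overall architecture matches the paper's: express $u_t$ in the moving eigenbasis, project onto the span of the near-resonant modes, and exploit the uniform spectral gap for the complementary modes. Parts (a)--(c) are substantively the same argument in a different wrapper. In (a) you invert $A(t)=\id-K-k^2K_{q(t)}$ directly via analytic dependence of the inverse, whereas the paper sets up a Neumann series in $t-t_0$ for $u_t$; both yield analyticity, the local bound, and the derivative formula. In (b) your lifting is a fixed elliptic one ($-\Delta v+v=0$, $\p_\nu v=g$), while the paper lifts through a nearby non-resonant Helmholtz problem at $\bar t$; both reduce to a homogeneous-Neumann problem whose $Q_t$-projection is controlled by the uniform gap, then upgraded from $L^2$ to $H^1$ by the same energy identity. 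Your (c) is a covering argument that reduces to (a) and (b) plus a finite number of bounded extra coefficients; the paper instead just reruns (b) verbatim with $J$ in place of $I$. These are cosmetic differences.

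Part (d) is where you genuinely depart from the paper. The paper shows $P_t u_t$ extends analytically through $t_0$, constructs $u_{t_0}$ explicitly, and then estimates $\norm{u_t-u_{t_0}}_{H^1}$ directly via a fixed-point identity, reading off the Lipschitz constant. You instead try to bound $\norm{\p_t u_t}_{H^1}$ uniformly on $[0,1]\setminus\{t_1,\dots,t_K\}$ and patch at the exceptional times. The derivative-bound route is legitimate and uses the nice observation that $(\p_t u_t,\varphi_j(t))_{L^2}=h_j'(t)-(u_t,\p_t\varphi_j(t))_{L^2}$ controls the would-be singular coefficients, but it leaves one step underjustified: you assert that $u_t$ ``admits a continuous extension at each $t_l$'' before you have the derivative bound in hand, and the derivative bound alone only gives Lipschitz continuity on each open subinterval, i.e.\ existence of one-sided limits $u_{t_l^{\pm}}$. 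You still need to argue that these agree. This can be repaired: both one-sided limits solve the same Neumann problem at $t_l$, so their difference lies in $N(q(t_l))=\operatorname{span}\{\varphi_j(t_l):j\in I_l\}$; but the coefficients $(u_t,\varphi_j(t))_{L^2}=h_j(t)$ extend continuously to the same value $h_j(t_l)$ from either side, forcing the difference to vanish. With that addition your argument closes. One further small imprecision: the uniform lower bound $\abs{\lambda_j(t)}\geq c$ for $j\notin I$ and $t$ near $t_0$ is not literally a conclusion of Lemma \ref{ev_pert}; it relies on the uniform derivative bound $0\leq\lambda_j'(t)\leq C$ (over all $j$ and $t$), which follows from the variational formula \eqref{lambdaj_derivative_formula} and should be stated explicitly as in the paper.
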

\begin{proof}
We first show that $0$ is a Neumann eigenvalue of $H(t)$ at only finitely many times $t$. Note that $\Delta + k^2 q_2$ has at most finitely many positive Neumann eigenvalues. Since the functions $\lambda_j(t)$ are strictly increasing and since the Neumann eigenvalues of $\Delta + k^2 q_2$ are given by $\lambda_j(1)$, we see that only finitely many of the functions $\lambda_j(t)$ have a zero in $[0,1]$. Thus there are only finitely many times $0 \leq t_1 < \ldots < t_K \leq 1$ in the interval $t \in [0,1]$ so that $0$ is a Neumann eigenvalue of $H(t)$.

\vspace{10pt}

{\it Proof of part (a).} \\ 

\noindent 
Fix any $g \in L^2(\p \Omega)$ and let $t \in [0,1] \setminus \{ t_1, \ldots,
t_K \}$. Then the Neumann problem for $\Delta + k^2 q(t)$ in $\Omega$ is
well-posed, and we define $u_t = T_{q(t)}(0,g)$ as the unique solution of
\eqref{helmholtz_qt_equation}. Fix $t_0 \in [0,1] \setminus \{t_1, \ldots,
t_K\}$, and note that for $t$ close to $t_0$ one has 
\[
(\Delta+k^2 q(t_0)) (u_t - u_{t_0}) = -k^2(q(t)-q(t_0)) u_t = -k^2(t-t_0)(q_2-q_1) u_t
\]
and $\p_{\nu}(u_t-u_{t_0})|_{\p \Omega} = 0$. It follows that 
\[
[\mathrm{Id} + k^2 (t-t_0) G(q(t_0)) ((q_2-q_1) \,\cdot\,)]u_t = u_{t_0}
\]
where $G(q(t_0)): F \mapsto T_{q(t_0)}(F,0)$ is bounded $L^2(\Omega) \to
H^1(\Omega)$ by Lemma \ref{lemma_ndmap_general}. Choosing $t$ close to $t_0$,
we can solve the last equation by Neumann series so that 
\[
u_t = \sum_{j=0}^{\infty}  (-k^{2})^j (t-t_0)^j [ G(q(t_0)) ((q_2-q_1)\,\cdot\,)]^j u_{t_0}.
\]
Thus $t \mapsto u_t$ is real-analytic in $[0,1] \setminus \{t_1, \ldots,
t_K\}$. Moreover, for any $t_0 \in [0,1] \setminus \{t_1, \ldots, t_K\}$ there
is $\eps(t_0) > 0$ so that 
\begin{equation} \label{ut_tzero_bound}
\norm{u_{t}}_{H^1(\Omega)} \leq 2 \norm{u_{t_0}}_{H^1(\Omega)} \leq C(t_0) \norm{g}_{L^2(\p \Omega)}, \qquad \abs{t-t_0} < \eps(t_0).
\end{equation}
This proves the uniform bound for $\norm{u_t}_{H^1(\Omega)}$ over any compact subset $F$ of $[0,1] \setminus \{ t_1, \ldots, t_K \}$. Finally, differentiating the power series and evaluating at $t_0$ yields 
\[
\partial_t u_t|_{t=t_0} = -k^2 G(q(t_0))((q_2-q_1) u_{t_0}),
\]
so that $\partial_t u_t|_{t=t_0}$ is the unique solution of $(\Delta + k^2 q(t_0)) v = -k^2(q_2-q_1) u_{t_0}$ in $\Omega$ with $\p_{\nu} v|_{\p \Omega} = 0$.

\vspace{10pt}

{\it Proof of part (b).} \\ 

\noindent 
Let $t_0$ be one of $t_1, \ldots, t_K$. Let $I = \{ j \geq 1 \,;\, \lambda_j(t_0) = 0 \}$ (so that $I$ is finite), and write 
\[
u_t = v_t + w_t, \qquad v_t = P_t u_t, \ \ w_t = Q_t u_t,
\]
where $P_t$ and $Q_t$ are the orthogonal projections on $L^2(\Omega)$ given by 
\[
P_t u = \sum_{j \in I} (u, \varphi_j(t))_{L^2(\Omega)} \varphi_j(t), \qquad Q_t u = \sum_{j \notin I} (u, \varphi_j(t))_{L^2(\Omega)} \varphi_j(t).
\]
We need to prove that $\norm{w_t}_{H^1(\Omega)} \leq C \norm{g}_{L^2(\p \Omega)}$ for $t$ close to $t_0$.

Fix some $\bar{t} \in [0,1] \setminus \{ t_1, \ldots, t_K \}$, and write $u_t = r + h_t$ where $r \in H^1(\Omega)$ is the unique solution of 
\[
(\Delta + k^2 q(\bar{t})) r = 0 \text{ in $\Omega$}, \qquad \p_{\nu} r|_{\p \Omega} = g.
\]
Then $h_t$ solves 
\[
(\Delta + k^2 q(t)) h_t = F_t \text{ in $\Omega$}, \qquad \p_{\nu} h_t|_{\p \Omega} = 0,
\]
where 
\[
F_t = -(\Delta + k^2 q(t)) r = -k^2(q(t)-q(\bar{t})) r = -k^2 (t-\bar{t}) (q_2-q_1) r.
\]
Now $w_t = Q_t u_t = Q_t r + Q_t h_t$, so that 
\[
\norm{w_t}_{L^2(\Omega)} \leq \norm{r}_{L^2(\Omega)} + \left[ \sum_{j \notin I} \left( \int_{\Omega} h_t \varphi_j(t) \,dx \right)^2 \right]^{1/2}.
\]
Testing the equation for $h_t$ against $\varphi_j(t)$ and integrating by parts gives that 
\[
\lambda_j(t) \int_{\Omega} h_t \varphi_j(t) \,dx = \int_{\Omega} F_t \varphi_j(t) \,dx
\]
and consequently 
\[
\norm{w_t}_{L^2(\Omega)} \leq \norm{r}_{L^2(\Omega)} + \left[ \sum_{j \notin I} \frac{1}{\lambda_j(t)^2} \left( \int_{\Omega} F_t \varphi_j(t) \,dx \right)^2 \right]^{1/2}.
\]
Now, the main point is that $\abs{\lambda_j(t_0)} \geq c > 0$ for $j \notin I$. Moreover, the formula \eqref{lambdaj_derivative_formula} implies that 
\[
0 \leq \lambda_j'(t) \leq C, \qquad \text{uniformly over $j \geq 1$ and $t \in [0,1]$}.
\]
These facts imply that there is $\eps > 0$ so that 
\[
\abs{\lambda_j(t)} \geq c/2, \qquad \text{uniformly over $j \notin I$ and $\abs{t-t_0} \leq \eps$}.
\]
It follows that 
\[
\norm{w_t}_{L^2(\Omega)} \leq \norm{r}_{L^2(\Omega)} + \frac{2}{c} \norm{F_t}_{L^2(\Omega)}, \qquad \abs{t-t_0} \leq \eps.
\]
Thus $\norm{w_t}_{L^2(\Omega)} \leq C \norm{g}_{L^2(\p \Omega)}$ uniformly over $t \in [t_0-\eps, t_0+\eps]$, since $\norm{r}_{H^1(\Omega)} \leq C \norm{g}_{L^2(\p \Omega)}$ and $F_t = -k^2 (t-\bar{t}) (q_2-q_1) r$.

Finally we estimate the $H^1$ norm. By Lemma \ref{lemma_ndmap_general}, we have  that
\begin{equation} \label{vt_definition}
v_t = -\sum_{j \in I} \frac{1}{\lambda_j(t)} \left[ \int_{\p \Omega} g \varphi_j(t) \,dS \right] \varphi_j(t).
\end{equation}
Now since $w_t$ solves the equation 
\[
(\Delta + k^2 q(t)) w_t = G_t \text{ in $\Omega$}, \qquad \p_{\nu} w_t|_{\p \Omega} = g,
\]
where by \eqref{vt_definition} 
\[
G_t = -(\Delta + k^2 q(t)) v_t = -\sum_{j \in I} \left[ \int_{\p \Omega} g \varphi_j(t) \,dS \right] \varphi_j(t),
\]
we obtain that 
\begin{align*}
\norm{\nabla w_t}_{L^2(\Omega)}^2 &= \int_{\Omega} (-\Delta w_t) w_t \,dx + \int_{\p \Omega} (\p_{\nu} w_t) w_t \,dS \\
 &= \int_{\Omega} (k^2 q(t) w_t - G_t) w_t \,dx + \int_{\p \Omega} g w_t \,dS.
\end{align*}
Consequently 
\[
\norm{w_t}_{H^1(\Omega)}^2 \leq C (\norm{w_t}_{L^2(\Omega)}^2 + \norm{G_t}_{L^2(\Omega)}^2) + \int_{\p \Omega} g w_t \,dS.
\]
Using Cauchy's inequality with $\eps$ in the boundary integral and the trace result $\norm{w_t}_{L^2(\p \Omega)} \leq C \norm{w_t}_{H^1(\Omega)}$, we obtain that 
\[
\norm{w_t}_{H^1(\Omega)}^2 \leq C (\norm{w_t}_{L^2(\Omega)}^2 + \norm{G_t}_{L^2(\Omega)}^2 + \norm{g}_{L^2(\p \Omega)}^2).
\]
We have seen above that $\norm{w_t}_{L^2(\Omega)} \leq C \norm{g}_{L^2(\p \Omega)}$ uniformly over $\abs{t-t_0} \leq \eps$, and the same is true for $\norm{G_t}_{L^2(\Omega)}$. Thus $\norm{w_t}_{H^1(\Omega)} \leq C \norm{g}_{L^2(\p \Omega)}$ uniformly over $\abs{t-t_0} \leq \eps$.

\vspace{10pt}

{\it Proof of part (c).} \\ 

\noindent This is completely analogous to the proof of part (b), upon using the fact that $\abs{\lambda_j(t)} \geq c > 0$ uniformly over $j \notin J$ and $t \in [0,1]$.

\vspace{10pt}

{\it Proof of part (d).} \\

\noindent 
Let now $g \in D^{\perp}$, let $t_0 = t_l$ where $1 \leq l \leq K$, and let $t \neq t_0$ be close to $t_0$. As in part (b), we write $u_t = v_t + w_t$ where $v_t = P_t u_t$ and $w_t = Q_t u_t$.

We first prove that under the assumption $g \in D^{\perp}$, the map $t \mapsto
v_t$ is a real-analytic from $[0,1]$ to $H^1(\Omega)$.
By \eqref{vt_definition} we have that
\begin{equation*} 
v_t = -\sum_{j \in I} \frac{1}{\lambda_j(t)} \left[ \int_{\p \Omega} g \varphi_j(t) \,dS \right] \varphi_j(t).
\end{equation*}
Now $\lambda_j(t_0) = 0$, so $v_t$ could potentially blow up as $t \to t_0$.
However, this is prevented by the fact that $g \in D^{\perp}$, which ensures
that $v_t$ may be written as 
\[
v_t = -\sum_{j \in I} \left[ \int_{\p \Omega} g \frac{\varphi_j(t)-\varphi_j(t_0)}{\lambda_j(t)-\lambda_j(t_0)} \,dS \right] \varphi_j(t).
\]
Since $\lambda_j$ and $\varphi_j$ are real-analytic, one has 
\begin{equation} \label{lambdaj_varphij_difference}
\lambda_j(t)-\lambda_j(t_0) = \mu_j(t)(t-t_0), \quad \varphi_j(t)-\varphi_j(t_0) = \psi_j(t)(t-t_0)
\end{equation}
where $\mu_j$ and $\psi_j$ are real-analytic near $t_0$ with $\psi_j$ taking values in $H^1(\Omega)$, and $\mu_j(t_0) = \lambda_j'(t_0) > 0$. Thus 
\begin{equation} \label{vt_representation}
v_t = -\sum_{j \in I} \left[ \int_{\p \Omega} g \frac{\psi_j(t)}{\mu_j(t)} \,dS \right] \varphi_j(t)
\end{equation}
where $\mu_j(t) \geq c > 0$ near $t_0$. The map $[0,1] \to L^2(\p \Omega)$, $t
\mapsto \psi_j(t)|_{\p \Omega}$ is real-analytic since the trace operator is
bounded from $H^1(\Omega)$ to $L^2(\p \Omega)$. Thus $t \mapsto v_t$ is
real-analytic near $t_0$, and one has $\norm{v_t}_{H^1(\Omega)} \leq C
\norm{g}_{L^2(\p \Omega)}$ near $t_0$. Combined with part (b), this implies
that 
\begin{equation} \label{ut_uniform_bound}
\norm{u_t}_{H^1(\Omega)} \leq C \norm{g}_{L^2(\p \Omega)} \quad \text{for $t$ close to $t_0$.}
\end{equation}

Next we define $u_{t_0}$ so that the map $s \mapsto u_s$ is Lipschitz continuous at $t_0$. Recalling the operator $D(q(t_0))^{\perp} \to H^1(\Omega), \ g \mapsto u_g$ where $u_g = T_{q(t_0)}(0,g)$ from Lemma \ref{lemma_ndmap_general}, we define 
\[
u_{t_0} := u_g + \sum_{j \in I} (v_{t_0} - u_g, \varphi_j(t_0))_{L^2(\Omega)} \varphi_j(t_0).
\]
Then $u_{t_0}$ solves $(\Delta + k^2 q(t_0)) u_{t_0} = 0$ in $\Omega$ with $\partial_{\nu} u_{t_0}|_{\p \Omega} = g$, and one has $P_{t_0} u_{t_0} = v_{t_0}$.

It remains to prove that $s \mapsto u_s$ is Lipschitz continuous at $t_0$. Note that 
\[
(\Delta+k^2 q(t_0)) (u_t - u_{t_0}) = -k^2(q(t)-q(t_0)) u_t = -k^2(t-t_0)(q_2-q_1) u_t
\]
and $\p_{\nu}(u_t-u_{t_0})|_{\p \Omega} = 0$. It follows from Lemma \ref{lemma_ndmap_general} that the function $-k^2(t-t_0)(q_2-q_1) u_t$ is in $N(q(t_0))^{\perp}$, and that 
\[
u_t - u_{t_0} = T_{q(t_0)}(-k^2(t-t_0)(q_2-q_1) u_t, 0) + \varphi
\]
for some $\varphi \in N(q(t_0))$. Define the operator 
\[
G(q(t_0)): N(q(t_0))^{\perp} \to H^1(\Omega), \ \ F \mapsto Q_{t_0} T_{q(t_0)}(F,0).
\]
Then $G(q(t_0))$ is bounded (since $Q_{t_0} = \mathrm{Id} - P_{t_0}$ is bounded on $H^1(\Omega)$), and 
\[
u_t - u_{t_0} = -k^2(t-t_0)G(q(t_0))((q_2-q_1)u_t) + P_{t_0}(u_t - u_{t_0}).
\]
Using the uniform bound \eqref{ut_uniform_bound}, we get that 
\[
\norm{u_t - u_{t_0}}_{H^1(\Omega)} \leq C \norm{g}_{L^2(\p \Omega)} \abs{t-t_0} + \norm{P_{t_0}(u_t-u_{t_0})}_{H^1(\Omega)}.
\]
To analyze the last term, we note that by the assumption that $g \in D^{\perp}$ and by \eqref{lambdaj_varphij_difference} and \eqref{vt_representation} 
{{\small 
\begin{align*}
 &P_{t_0} (u_t-u_{t_0}) = \sum_{j \in I} \left[ \int_{\Omega} (u_t - u_{t_0}) \varphi_j(t_0) \,dx \right] \varphi_j(t_0) \\
 &= \sum_{j \in I} \left[ \int_{\Omega} (u_t \varphi_j(t) - u_{t_0} \varphi_j(t_0)) \,dx \right] \varphi_j(t_0) - \sum_{j \in I} \left[ \int_{\Omega} u_t (\varphi_j(t) - \varphi_j(t_0)) \,dx \right] \varphi_j(t_0) \\
 &= -\sum_{j \in I} \left[ \int_{\p \Omega} g \left[ \frac{\psi_j(t)}{\mu_j(t)} - \frac{\psi_j(t_0)}{\mu_j(t_0)} \right] \,dS \right] \varphi_j(t_0) -  (t-t_0) \sum_{j \in I} \left[ \int_{\Omega} u_t \psi_j(t) \,dx \right] \varphi_j(t_0)
\end{align*}}}
$\!\!$where $\mu_j$ and $\psi_j$ are real-analytic near $t_0$ with $\psi_j$ taking values in $H^1(\Omega)$. Thus in particular 
\begin{align*}
\norm{P_{t_0}(u_t-u_{t_0})}_{H^1(\Omega)} \leq C (\norm{g}_{L^2(\p \Omega)} + \norm{u_t}_{L^2(\Omega)}) \abs{t-t_0}.
\end{align*}
Using \eqref{ut_uniform_bound} again, this concludes the proof that $s \mapsto u_s$ is Lipschitz continuous near $t_0$. Since this is true near $t_1, \ldots, t_K$, and since $s \mapsto u_s$ is real-analytic away from $\{ t_1, \ldots, t_K \}$, we have proved (d).
\end{proof}

We are now ready to prove Theorem \ref{thm_monotonicity_dimension}.

\begin{proof}[Proof of Theorem \ref{thm_monotonicity_dimension}]
We will do the proof in three steps.

\vspace{10pt}

%

{\it Step 1:} Definition of a finite-dimensional space $D$. \\

\noindent
We can assume that $q_2 \geq q_1$ and $q_2 \not\equiv q_1$, since the case $q_2
\equiv q_1$ is immediate.
Write $d_1 = d(q_1)$, $d_2 = d(q_2)$ and $N = d_2 - d_1$, and let $q(t)$ and
$\lambda_j(t)$ be as in Lemma \ref{ev_pert}. Now the positive Neumann
eigenvalues of $q(0)$ are $\lambda_1(0), \ldots,\lambda_{d_1}(0)$. Since the
functions $\lambda_j(t)$ are strictly increasing, the positive Neumann
eigenvalues related to $q(1)$ are  $\lambda_1(1), \ldots, \lambda_{d_1}(1),$$
\lambda_{j_1}(1), \ldots,$ $ \lambda_{j_{N}}(1)$ for some indices $j_1, \ldots,
j_{N}$ (here it is possible that $N = 0$). We reorder the indices for $j \geq
d_1+1$ so that the positive Neumann eigenvalues related to $q(1)$ are in descending order
$\lambda_1(1),\ldots, \lambda_{d_1}(1),$$ \lambda_{d_1+1}(1),\ldots, \lambda_{d_2}(1)$. It
follows that $\lambda_j(t)$ for $j \leq d_1$ are positive on $[0,1]$,
$\lambda_j(t)$ for $d_1+1 \leq j \leq d_2$ have a unique zero and cross from
negative to positive on $[0,1]$, and $\lambda_j(t)$ for $j \geq d_2+1$ are
always negative on $[0,1]$.

Let $0 < t_1 < \ldots < t_K < 1$ be the times when $0$ is a Neumann eigenvalue of $H(t)$, and let 
\[
D = D(q(t_1)) \oplus \ldots \oplus D(q(t_K))
\]
as in Lemma \ref{lemma_ut_properties}. By Lemmas \ref{lemma_ndmap_general} and
\ref{ev_pert}, $\dim(D(q(t_l)))$ is the multiplicity of $0$ as a Neumann
eigenvalue of $H(t_l)$, which is precisely the number of functions
$\lambda_j(t)$ that vanish at $t_l$. Since exactly $N = d_2-d_1$ functions
$\lambda_j$ have a zero in $[0,1]$, it follows that $\dim(D) \leq N$. (The
dimension of $D$ would be equal to $N$ if all the spaces $D(q(t_1)), \ldots,
D(q(t_K))$ would be linearly independent, but this may not be true in general.)

\vspace{10pt}

{\it Step 2:} We will next show that 
\[
((\Lambda(q_2) - \Lambda(q_1)) g, g) \geq 0 \qquad \text{for all $g \in D^{\perp}$}.
\]

\vspace{10pt}

\noindent Fix $g \in D^{\perp}$, and let $[0,1] \to H^1(\Omega), \ t \mapsto u_t$ be the map in Lemma \ref{lemma_ut_properties}. Since $q(0) = q_1$ and $q(1) = q_2$, it follows that 
\[
((\Lambda(q_2) - \Lambda(q_1)) g, g) = \int_{\p \Omega} u_1 g \,dS - \int_{\p \Omega} u_0 g \,dS.
\]
We write, for $t \in [0,1]$, 
\[
F(t) := \int_{\p \Omega} u_t g \,dS = \int_{\Omega} (\abs{\nabla u_t}^2 - k^2 q(t) u_t^2) \,dx.
\]
Then $F$ is Lipschitz continuous in $[0,1]$ since $t \mapsto u_t$ is:
\[
\abs{F(t) - F(t_0)} \leq \norm{u_t-u_{t_0}}_{L^2(\p \Omega)} \norm{g}_{L^2(\p \Omega)} \leq C_g \norm{u_t-u_{t_0}}_{H^1(\Omega)} \leq C_g \abs{t-t_0}.
\]

We compute the derivative of $F$ using the fact from Lemma \ref{lemma_ut_properties} that $u_t$ is real-analytic in $[0,1] \setminus \{t_1,\ldots,t_K\}$, and $\partial_t u_t$ is the unique solution of 
\[
(\Delta + k^2 q(t)) \partial_t u_t = - k^2 q'(t) u_t \text{ in $\Omega$},
    \qquad \partial_{\nu} \p_t u_t|_{\partial \Omega} = 0.
\]
Thus 
\begin{align*}
F'(t) &= 2 \int_{\Omega} (\nabla u_t \cdot \nabla \partial_t u_t - k^2 q(t) u_t
    \partial_t u_t) \,dx - k^2 \int_{\Omega} q'(t) u_t^2 \,dx \\
 &= 2 k^2 \int_{\Omega} q'(t) u_t^2 \,dx - k^2 \int_{\Omega} q'(t) u_t^2 \,dx \\
 &= k^2 \int_{\Omega} q'(t) u_t^2 \,dx.
\end{align*}
Since $F(t)$ is Lipschitz continuous and hence absolutely continuous, we may
use the fundamental theorem of calculus to compute 
\begin{align}
((\Lambda(q_2) - \Lambda(q_1)) g, g) &= F(1) - F(0) = \int_0^1 F'(t) \,dt \notag \\
 &= k^2 \int_0^1 \int_{\Omega} (q_2-q_1) u_t^2 \,dx \,dt. \label{ndmap_difference_computation}
\end{align}
Since $q_2 - q_1 \geq 0$ a.e., we get that $((\Lambda(q_2) - \Lambda(q_1)) g,g) \geq 0$ for $g \in D^{\perp}$ as required.

\vspace{10pt}

{\it Step 3:} One has $d(q_1, q_2) \leq d(q_2) - d(q_1)$. \\

\noindent By the previous step one has $((\Lambda(q_2) - \Lambda(q_1)) g, g) \geq 0$ for all $g \in D^{\perp}$. By \cite[Corollary 3.3]{harrach2018helmholtz} this implies that $\Lambda(q_1) \leq_{\dim(D)} \Lambda(q_2)$, i.e.\ that $\Lambda(q_2) - \Lambda(q_1)$ has $\leq \dim(D) \leq d(q_2) - d(q_1)$ negative eigenvalues.
\end{proof}

\section{Lower bounds for the number of negative eigenvalues} \label{sec_proof2}

In this section we will prove Theorems \ref{thm_monotonicity_dimension_optimal} and \ref{thm_monotonicity_dimension_example}. We will work under the assumption that $q_2-q_1$ is a positive constant, which ensures that the Neumann eigenvalues and eigenfunctions of $\Delta + k^2 q(t)$ behave in a very simple way as $t$ varies (in particular, analytic perturbation theory is not required).

\begin{proof}[Proof of Theorem \ref{thm_monotonicity_dimension_optimal}]
The proof proceeds in several steps.

\vspace{10pt}

{\it Step 1:} Notation for eigenvalues and eigenfunctions. \\

\noindent
Let $d_1 = d(q_1)$, and let 
\[
\lambda_1 \geq \lambda_2 \geq \ldots \geq \lambda_{d_1} > 0 > \lambda_{d_1+1} \geq \ldots \to -\infty
\]
be the Neumann eigenvalues of $\Delta + k^2 q_1$ in $\Omega$. (Here it is possible that $d_1 = 0$, and all eigenvalues are negative.) Let $(\varphi_j)_{j=1}^{\infty}$ be a corresponding orthonormal basis of $L^2(\Omega)$ consisting of Neumann eigenfunctions, i.e. 
\[
(\Delta + k^2 q_1) \varphi_j = \lambda_j \varphi_j \text{ in $\Omega$}, \qquad \partial_{\nu} \varphi_j|_{\partial \Omega} = 0.
\]
Define the potentials $q(t) = (1-t) q_1 + t q_2$. Since by assumption $c = q_2 - q_1$ is a positive constant, we have 
\[
q(t) = q_1 + tc.
\]
Now, one has 
\[
(\Delta + k^2 q_1) \varphi = \lambda \varphi \quad \Longleftrightarrow \quad (\Delta + k^2 q(t)) \varphi = (\lambda + k^2 c t) \varphi.
\]
Thus the Neumann eigenvalues of $\Delta + k^2 q(t)$ are given by 
\begin{align} \label{eq_lj_lin}
\lambda_j(t) = \lambda_j + k^2 ct,
\end{align}
and the corresponding $L^2$-orthonormal Neumann eigenfunctions $\varphi_j(t) = \varphi_j$ are independent of $t$. We note that the functions $\lambda_j(t)$, $t \in [0,1]$, are strictly increasing. They are positive if $j \leq d_1$, cross from negative to positive and satisfy $\lambda_j(t_j) = 0$ at times 
\[
0 < t_{d_1+1} \leq \ldots \leq t_{d_2} < 1
\]
if $d_1+1 \leq j \leq d_2$, and stay negative if $j \geq d_2+1$. Here $d_2 = d(q_2)$.

\vspace{10pt}

{\it Step 2:} Formula for $((\Lambda(q_2) - \Lambda(q_1)) g, g)$. \\

\noindent Fix $g \in L^2(\partial \Omega)$, and let $t \in [0,t_{d_1+1}) \cup (t_{d_2},1]$. Let $u_t$ be the solution of
\begin{equation*} 
(\Delta + k^2 q(t)) u_t = 0 \text{ in $\Omega$}, \qquad \partial_{\nu} u_t|_{\partial \Omega} = g.
\end{equation*}
Note that the Neumann problem is well-posed for $t$ in this range, and as in Lemma \ref{lemma_ndmap_general} one has the $L^2(\Omega)$-convergent representation 
\[
u_t = \sum_{j=1}^{\infty} c_j(t) \varphi_j
\]
with 
\begin{equation} \label{cj_formula}
c_j(t) = \int_{\Omega} u_t \varphi_j \,dx = -\frac{1}{\lambda_j(t)} \int_{\partial \Omega} g \varphi_j \,dS.
\end{equation}
As in Lemma \ref{lemma_ut_properties} (but with slightly different notation), we write $u_t = v_t + w_t$ where 
\begin{equation} \label{vt_wt_definition}
v_t = \sum_{j=d_1+1}^{d_2} c_j(t) \varphi_j, \qquad w_t = \sum_{j \notin [d_1+1,d_2]} c_j(t) \varphi_j.
\end{equation}
Thus we have 
\begin{align}
 &((\Lambda(q_2) - \Lambda(q_1)) g, g) = \int_{\partial \Omega} (u_1 - u_0) g \,dS \notag \\
 & \hspace{20pt} =  \sum_{j=d_1+1}^{d_2} (c_j(1)-c_j(0)) \int_{\partial \Omega} g \varphi_j \,dS + \int_{\partial \Omega} (w_1-w_0) g \,dS \notag \\
 & \hspace{20pt} = \sum_{j=d_1+1}^{d_2} \frac{k^2 c}{\lambda_j(1) \lambda_j(0)} \left( \int_{\partial \Omega} g \varphi_j \,dS \right)^2 + \int_{\partial \Omega} (w_1 - w_0) g \,dS. \label{quadratic_representation}
\end{align}
Note that the coefficient $\frac{k^2 c}{\lambda_j(1) \lambda_j(0)}$ is negative exactly when $d_1+1 \leq j \leq d_2$, so that the sum in \eqref{quadratic_representation} is $\leq 0$ while the last integral may be positive. 

\vspace{10pt}

{\it Step 3:} Formula for $((\Lambda(q_2-b) - \Lambda(q_1+a)) g, g)$. \\

\noindent We will now replace $q_1$ by $q_1 + a$ and $q_2$ by $q_2 - b$ and show that for suitable choices of $a$ and $b$, the negative contributions in \eqref{quadratic_representation} dominate the positive ones. This will imply that the corresponding quadratic form is negative on some finite-dimensional space, yielding a lower bound for the number of negative eigenvalues. We do the rescalings  
\[
a = c \alpha, \qquad b = c (1-\beta),
\]
where $\alpha, \beta \in [0,1]$ and 
\[
q(\alpha) = q_1 + a, \qquad q(\beta) = q_2 - b.
\]
The equation \eqref{quadratic_representation} now becomes 
\begin{multline}
((\Lambda(q_2-b) - \Lambda(q_1+a)) g, g) = \sum_{j=d_1+1}^{d_2} \frac{k^2 c (\beta-\alpha)}{\lambda_j(\beta) \lambda_j(\alpha)} \left( \int_{\partial \Omega} g \varphi_j \,dS \right)^2 \\
 + \int_{\partial \Omega} (w_{\beta} - w_{\alpha}) g \,dS. \label{quadratic_representation_rescaled}
\end{multline}
In the notation of Theorem \ref{thm_monotonicity_dimension_optimal}, one has $\mu_1 = \lambda_{d_1+1}$ and $\mu_2 = \lambda_{d_2}(1) = \lambda_{d_2} + k^2 c$. Then $t_{d_1+1} = c^{-1} k^{-2} |\mu_1|$ (since $\lambda_{d_1+1}(t_{d_1+1}) = 0$) and $t_{d_2} = 1 - c^{-1} k^{-2} \mu_2$ (since $\lambda_{d_2}(t_{d_2}) = 0$). It follows that  
\begin{equation} \label{ab_alphabeta_rescaling}
\left\{ \begin{split}
a \in [0,k^{-2}|\mu_1|) &\text{ if and only if } \alpha \in [0, t_{d_1+1}), \\
b \in [0,k^{-2} \mu_2) &\text{ if and only if } \beta \in (t_{d_2}, 1].
\end{split} \right.
\end{equation}
The next step is to show that the last integral in \eqref{quadratic_representation_rescaled} is uniformly bounded over $\alpha \in [0,t_{d_1+1})$ and $\beta \in (t_{d_2},1]$. This will follow since $w_t$ is related only to those eigenfrequencies that are uniformly bounded away from zero.

\vspace{10pt}

{\it Step 4:} $\norm{w_t}_{H^1(\Omega)} \leq C \norm{g}_{L^2(\p \Omega)}$ uniformly over $t \in[0,t_{d_1+1}) \cup (t_{d_2},1]$. \\

\noindent This follows directly from Lemma \ref{lemma_ut_properties}(c).

\vspace{10pt}

{\it Step 5:} Proof of part (a). \\

\noindent We will show that there is a subspace $V$ of $L^2(\p \Omega)$ with $\dim(V) = N_1$ such that \eqref{quadratic_representation_rescaled} is negative when $g \in V \setminus \{0\}$, $\alpha < t_{d_1+1}$ is close to $t_{d_1+1}$, and $\beta \in (t_{d_2}, 1]$. Combined with \eqref{ab_alphabeta_rescaling} and \cite[Lemma 3.2(b)]{harrach2018helmholtz} applied to $A = -(\Lambda(q_2-b) - \Lambda(q_1+a))$ with $r = 0$, this will prove part (a).

By the trace theorem and Step 4, we have 
\[
\left\lvert \int_{\partial \Omega} (w_{\beta} - w_{\alpha}) g \,dS \right\rvert \leq C \norm{w_{\beta}-w_{\alpha}}_{H^1(\Omega)} \norm{g}_{L^2(\p \Omega)} \leq C \norm{g}_{L^2(\p \Omega)}^2
\]
uniformly over $\alpha \in [0,t_{d_1+1})$ and $\beta \in  (t_{d_2},1]$. Thus 
\[
((\Lambda(q_2-b) - \Lambda(q_1+a)) g, g) \leq \sum_{j=d_1+1}^{d_2} \frac{k^2 c (\beta-\alpha)}{\lambda_j(\beta) \lambda_j(\alpha)} \left( \int_{\partial \Omega} g \varphi_j \,dS \right)^2 + C \norm{g}_{L^2(\p \Omega)}^2.
\]
If $j \in [d_1+1, d_2]$, then $\alpha < t_j$ and $\beta > t_j$, and $\lambda_j(\beta) = \lambda_j(t_j) + k^2 c (\beta-t_j) = k^2 c(\beta-t_j) > 0$. Thus one has 
\[
\frac{k^2 c(\beta-\alpha)}{\lambda_j(\beta)} \geq \frac{k^2 c(\beta-t_j)}{k^2 c(\beta-t_j)} = 1.
\]
Since $\lambda_j(\alpha) = \lambda_j(t_j) - k^2 c(t_j-\alpha) = - k^2 c(t_j-\alpha) < 0$, we obtain that 
\begin{multline*}
((\Lambda(q_2-b) - \Lambda(q_1+a)) g, g) \leq - \sum_{j=d_1+1}^{d_2} \frac{1}{k^2 c(t_j-\alpha)} \left( \int_{\partial \Omega} g \varphi_j \,dS \right)^2 \\
 + C \norm{g}_{L^2(\p \Omega)}^2
\end{multline*}
uniformly over $\alpha \in [0,t_{d_1+1})$ and $\beta \in  (t_{d_2},1]$.

Recall now the assumption that $\lambda_{d_1+1}$ has multiplicity $N_1$, and define 
\[
V = \mathrm{span}\{\varphi_{d_1+1}|_{\p \Omega}, \ldots, \varphi_{d_1+N_1}|_{\p \Omega} \}.
\]
Here $\varphi_{d_1+1}, \ldots, \varphi_{d_1+N_1}$ are Neumann eigenfunctions corresponding to $\lambda_{d_1+1}$. We claim that $\dim(V) = N_1$. For if $a_1 \varphi_{d_1+1}|_{\p \Omega} + \ldots + a_{N_1} \varphi_{d_1+N_1}|_{\p \Omega} = 0$, then the function $\varphi = a_1 \varphi_{d_1+1} + \ldots + a_{N_1} \varphi_{d_1+N_1}$ satisfies 
\[
(\Delta + k^2 q_1) \varphi = \lambda_{d_1+1} \varphi \text{ in $\Omega$}, \qquad \varphi|_{\p \Omega} = \p_{\nu} \varphi|_{\p \Omega} = 0.
\]
By the unique continuation principle this implies that $\varphi \equiv 0$, and since $\{\varphi_j \}$ are orthonormal in $L^2(\Omega)$ we obtain $a_1 = \ldots = a_{N_1} = 0$. This proves that $\dim(V) = N_1$.

Let now $g \in V \setminus \{0\}$. Since $\lambda_{d_1+1}$ has multiplicity
$N_1$ and since $t_j$ is the unique zero of $t \mapsto \lambda_j(t)$, by \eqref{eq_lj_lin} one has $t_{d_1+1} = \ldots = t_{d_1+N_1}$, and thus 
\begin{multline*}
((\Lambda(q_2-b) - \Lambda(q_1+a)) g, g) \leq - \frac{1}{k^2 c(t_{d_1+1}-\alpha)} \sum_{j=d_1+1}^{d_1+N_1} \left( \int_{\partial \Omega} g \varphi_j \,dS \right)^2 \\
 - \sum_{j=d_1+N_1+1}^{d_2} \frac{1}{k^2 c(t_j-\alpha)} \left( \int_{\partial \Omega} g \varphi_j \,dS \right)^2 + C \norm{g}_{L^2(\p \Omega)}^2
\end{multline*}
uniformly over $\alpha \in [0,t_{d_1+1})$ and $\beta \in  (t_{d_2},1]$. The middle term on the right is $\leq 0$, and writing 
\[
\delta = \inf_{g \in V, \norm{g}_{L^2(\p \Omega)} = 1} \sum_{j=d_1+1}^{d_1+N_1} \left( \int_{\partial \Omega} g \varphi_j \,dS \right)^2
\]
where $\delta > 0$ (the infimum is over the unit sphere in a finite dimensional normed space and the quantity inside the infimum is positive for $g \in V \setminus \{0\}$), we obtain that 
\[
((\Lambda(q_2-b) - \Lambda(q_1+a)) g, g) \leq \left( - \frac{\delta}{k^2 c(t_{d_1+1}-\alpha)} + C \right) \norm{g}_{L^2(\p \Omega)}^2
\]
where $C$ is uniform over $\alpha \in [0,t_{d_1+1})$ and $\beta \in  (t_{d_2},1]$. Thus choosing $\alpha < t_{d_1+1}$ sufficiently close to $t_{d_1+1}$, one has $((\Lambda(q_2-b) - \Lambda(q_1+a)) g, g) < 0$ for $g \in V \setminus \{0\}$. This concludes the proof of part (a).

\vspace{10pt}

{\it Step 6:} Proof of part (b). \\

\noindent This is completely analogous to Step 5: one defines the subspace 
\[
V = \mathrm{span}\{\varphi_{d_2-N_2+1}|_{\p \Omega}, \ldots, \varphi_{d_2}|_{\p \Omega} \}
\]
and shows that $((\Lambda(q_2-b) - \Lambda(q_1+a)) g, g) < 0$ for $g \in V \setminus \{0\}$ when $\beta \in (t_{d_2},1]$ is sufficiently close to $t_{d_2}$.
\end{proof}

\begin{proof}[Proof of Theorem \ref{thm_monotonicity_dimension_example}]
(a) If $\lambda_j$ are the Neumann eigenvalues of $\Delta + k^2 q$ in $\Omega$, then $\lambda_j \pm k^2 \eps$ are the eigenvalues of $\Delta + k^2 (q \pm \eps)$ in $\Omega$. Thus if $\eps_0 > 0$ is small enough and $\eps \leq \eps_0$, one has $d(q+\eps) - d(q-\eps) = N$, and $\Lambda(q+\eps)-\Lambda(q-\eps)$ has at most $N$ negative eigenvalues by Theorem \ref{thm_monotonicity_dimension}. Moreover, by Theorem \ref{thm_monotonicity_dimension_optimal} with $q_1 = q-\eps_0$, $q_2 = q+\eps_0$, $\mu_1 = -k^2 \eps_0$ and $\mu_2 = k^2 \eps_0$, we obtain that $\Lambda(q+\eps)-\Lambda(q-\eps)$ has at least $N$ negative eigenvalues for $\eps$ small.

(b) Recall that we now assume that $\Omega := (0,1)^2$.
It is enough to show that for any even $N \geq 2$, there is an eigenvalue
$\mu$ of $\Delta$ in $\Omega$ with multiplicity $N$. If this holds, then
choosing $c = -k^{-2} \mu$ gives that $0$ is an eigenvalue of $\Delta + k^2 c$
of multiplicity $N$, and the result follows from part (a).

An orthonormal basis of $L^2(\Omega)$ consisting of Neumann eigenfunctions of $\Delta$ in $\Omega$ is given by $(\varphi_{l_1,l_2})_{l_1, l_2=0}^{\infty}$, where 
\[
\varphi_{l_1,l_2}(x) = c_{l_1,l_2} \cos(l_1 x_1) \cos(l_2 x_2)
\]
for some normalizing constants $c_{l_1,l_2}$. The eigenvalue corresponding to $\varphi_{l_1,l_2}$ is $-(l_1^2+l_2^2)$. See e.g.\ \cite{grebenkov2013geometrical}.

We set $\lambda = 5^r$ where $r \geq 1$ is an odd integer, and write $N = r+1$. Since $5$ is a prime of the form $4m+1$, there are $4 N$ pairs $(s_1, s_2) \in \mZ^2$ such that $\lambda = s_1^2 + s_2^2$ \cite[Theorem 278]{hardy2008introduction}. Now $r$ is odd, so $\lambda$ is not a square and both $s_1$ and $s_2$ must be nonzero, and thus there are exactly $N$ pairs $(l_1,l_2) \in (\mathbb{N} \cup \{0\})^2$ so that $\lambda = l_1^2+l_2^2$. This shows that the multiplicity of $-\lambda$ as a Neumann eigenvalue of $\Delta$ is exactly $N$.
\end{proof}

\section{The Helmholtz equation with constant parameter} \label{sec_proof_constant}

In this section, we will treat the Neumann problem for the Helmholtz equation
\begin{equation} \label{neumann_problem_const}
\left\{ \begin{array}{rcl} (\Delta + k^2 q) u \!\!\!&=&\!\!\! 0 \text{ in $\Omega$}, \\[5pt]
\partial_{\nu} u \!\!\!&=&\!\!\! g \text{ on $\partial \Omega$}. \end{array} \right.
\end{equation}
with a constant coefficient $q(x)=\mathrm{const}$.
In this case, the Helmholtz solution operator can be expressed using the Neumann eigenfunctions 
of the Laplace equation, which allows us to give a simple independent proof of Theorem~\ref{thm_monotonicity_dimension}, and show that the 
dimension bound in Theorem~\ref{thm_monotonicity_dimension} is sharp for the Helmholtz solution operators.

For the special case of a constant coefficient in a two-dimensional unit square we also derive an infinite matrix representation of the Neumann-Dirichlet-operator
and study numerically the question whether the bound in Theorem~\ref{thm_monotonicity_dimension} is sharp for the Neumann-Dirichlet-operators.

\subsection{The dimension bound for the constant parameter case} 

\begin{Definition}
For $0\not\equiv q\in L^\infty(\Omega)$, and a non-resonant wavenumber $k>0$ we define the \emph{Helmholtz solution operator}
\begin{align*}
S(q):&\ H^1(\Omega)\to H^1(\Omega),\ F\mapsto v, 
\end{align*}
where $v\in H^1(\Omega)$ solves
\[
\int_\Omega \left(\nabla v\cdot \nabla w - k^2 q v w\right)\, dx =  (F,w)_{H^1(\Omega)}\quad \text{ for all } w\in H^1(\Omega).
\]
\end{Definition}

Note that, the Neumann-Dirichlet-operator
\[
\Lambda(q):\ L^2(\partial \Omega)\to L^2(\partial \Omega), \quad g\mapsto u|_{\partial \Omega},
\]
where $u\in H^1(\Omega)$ solves \eqref{neumann_problem_const}, obviously fulfills
\[
\Lambda(q)=\gamma S(q) \gamma^*.
\]
where $\gamma$ denotes the compact trace operator
\[
\gamma:\ H^1(\Omega) \to L^2(\partial \Omega),\quad v\mapsto v|_{\partial \Omega}.
\]

\begin{Theorem}\label{Thm:solution_operator_sharp_bound}
Let $\Omega\subset \R^n$ be a bounded Lipschitz domain. Let $a,b\in \R$ with $a<b$, and let $k>0$ be non-resonant for $q(x)=a$ and $q(x)=b$.
Then
\begin{enumerate}
\item[(a)] $S(b)-S(a)$ has exactly $d(b)-d(a)$ negative eigenvalues.
\item[(b)] $\Lambda(b)-\Lambda(a)$ has at most $d(b)-d(a)$ negative eigenvalues.
\end{enumerate}
\end{Theorem}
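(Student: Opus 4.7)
The plan is to diagonalise both $S(a)$ and $S(b)$ simultaneously in the Neumann eigenbasis of the Laplacian, which is possible precisely because $q$ is constant. Let $(\phi_j)_{j \geq 0}$ be an $L^2(\Omega)$-orthonormal basis of Neumann eigenfunctions of $-\Delta$ with eigenvalues $0 = \mu_0 \leq \mu_1 \leq \mu_2 \leq \cdots \to \infty$, and rescale to $\psi_j := (1+\mu_j)^{-1/2} \phi_j$ so that $(\psi_j)$ is an orthonormal basis of $H^1(\Omega)$. Testing the defining bilinear form $B_q(v,w) = \int_\Omega (\nabla v \cdot \nabla w - k^2 q v w)\,dx$ with the ansatz $v = c\,\psi_j$ and test functions $w = \psi_l$ yields
\[
S(q)\,\psi_j = \frac{1+\mu_j}{\mu_j - k^2 q}\,\psi_j, \qquad j \geq 0,
\]
which is well defined exactly when $k$ is non-resonant for $q$.

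Subtracting, $S(b) - S(a)$ acts diagonally in the same $H^1$-orthonormal basis with eigenvalues
\[
\nu_j \;=\; \frac{(1+\mu_j)\,k^2(b-a)}{(\mu_j - k^2 a)(\mu_j - k^2 b)}.
\]
Since $b > a$ and $1+\mu_j > 0$, $\nu_j < 0$ iff $(\mu_j - k^2 a)(\mu_j - k^2 b) < 0$, i.e.\ iff $k^2 a < \mu_j < k^2 b$. The Neumann eigenvalues of $\Delta + k^2 q$ for constant $q$ are $-\mu_j + k^2 q$, so $d(q) = \#\{j : \mu_j < k^2 q\}$, and non-resonance for $a$ and $b$ excludes the endpoints, giving $\#\{j : k^2 a < \mu_j < k^2 b\} = d(b) - d(a)$. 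This proves part (a) with an exact count.

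For part (b), I use the factorisation $\Lambda(q) = \gamma S(q) \gamma^*$ together with a codimension argument. Let $V \subset H^1(\Omega)$ be the span of those $\psi_j$ for which $\nu_j < 0$, so $\dim V = N := d(b)-d(a)$ and $(S(b)-S(a))|_{V^\perp}$ is positive semidefinite (orthogonal complement in $H^1$). For any $g \in L^2(\partial \Omega)$,
\[
((\Lambda(b) - \Lambda(a))g, g)_{L^2(\partial \Omega)} \;=\; ((S(b) - S(a))\gamma^* g, \gamma^* g)_{H^1(\Omega)},
\]
and the condition $\gamma^* g \perp V$ in $H^1$ is equivalent to $g \perp \gamma(V)$ in $L^2(\partial \Omega)$. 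Since $\dim \gamma(V) \leq N$, the quadratic form of $\Lambda(b) - \Lambda(a)$ is non-negative on a subspace of codimension at most $N$, so by \cite[Corollary 3.3]{harrach2018helmholtz} (or equivalently the Courant--Fischer min--max) $\Lambda(b) - \Lambda(a)$ has at most $N$ negative eigenvalues.

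The potential pitfall is purely bookkeeping: tracking the factors $1+\mu_j$ arising from the $H^1$-versus-$L^2$ normalisation, and confirming that no spurious zero or sign flip is introduced by $\mu_j$ coinciding with $k^2 a$ or $k^2 b$ (ruled out by non-resonance). There is no real analytic obstacle here, since the constancy of $q$ reduces the perturbation theory of Section~\ref{sec_proof1} to a single explicit diagonalisation.
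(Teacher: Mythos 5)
Your proposal is correct and follows essentially the same approach as the paper: diagonalise $S(q)$ in the Neumann eigenbasis (which is available precisely because $q$ is constant), observe that the eigenvalues of $S(b)-S(a)$ are negative exactly for $k^2 a < \mu_j < k^2 b$, and for part (b) push through the factorisation $\Lambda(q)=\gamma S(q)\gamma^*$ and cite \cite[Corollary 3.3]{harrach2018helmholtz}. The only difference is cosmetic: you work directly with the Laplacian Neumann eigenvalues $\mu_j$, while the paper routes the same computation through the compact operator $K=\iota^*\iota$ and its eigenvalues $\lambda_l = 1/(1+\mu_j)$.
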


Note that (b) follows from Theorem~\ref{thm_monotonicity_dimension}, but our proof of Theorem~\ref{Thm:solution_operator_sharp_bound} is independent of this result and rather elementary, so we believe that this is of independent interest.

As in the proof of Lemma~\ref{lemma_ndmap_general} (see also \cite[Section 2.1]{harrach2018helmholtz}), we let 
$\mathrm{Id}:\ H^1(\Omega) \to H^1(\Omega)$ denote the identity operator, $\iota:\ H^1(\Omega)\to L^2(\Omega)$ denote the compact embedding,
and $M_{q}:\ L^2(\Omega)\to L^2(\Omega)$ denote the multiplication operator by $q$. Then
$K:=\iota^* \iota$ and $K_q:=\iota^* M_{q} \iota$ are compact self-adjoint linear operators from $H^1(\Omega)$ to $H^1(\Omega)$, 
and 
\[
S(q):=(\mathrm{Id}-K-k^2 K_q)^{-1},
\]
where the inverse exists if and only if $k>0$ is non-resonant for the potential $q$, cf., e.g., \cite[Lemma~2.2]{harrach2018helmholtz}.

For constant coefficients $q(x)=a\in \R$ this simplifies to
\[
K_a=aK \quad \text{ and } \quad S(a)=(\mathrm{Id}-(1+a k^2) K)^{-1}.
\]

Since $K:\ H^1(\Omega)\to H^1(\Omega)$ is a compact self-adjoint, positive
definite operator, there exists an orthonormal basis $(v_l)_{l\in \mN}$
of $H^1(\Omega)$ of eigenfunctions corresponding to eigenvalues $\lambda_l>0$,
\[
Kv_l = \lambda_l v_l \quad \text{ for all } l\in \mN.
\]
Note that in this section, $\lambda_l$ are the eigenvalues of the compact operator $K$ which converge to zero (unlike in the earlier sections, where $\lambda_j$ were Neumann eigenvalues converging to $-\infty$).

\begin{Lemma}\label{lemma:sharp_bounds_proof}
\begin{enumerate}
\item [(a)] A function $v\in H^1(\Omega)$ is an eigenfunction of $K$ with eigenvalue $\lambda$ if and only if
$v$ is a Neumann eigenfunction of the Laplace equation with Neumann eigenvalue $\frac{1}{\lambda}-1$, i.e.,
\[
-\Delta v=\left( \frac{1}{\lambda} -1 \right) v,\quad \partial_\nu v|_{\partial \Omega}=0.
\]
\item[(b)] $k>0$ is non-resonant for the potential $q(x)=a\in \R$ if and only if
\[
\frac{1}{1+a k^2} \not\in \{\lambda_1,\lambda_2,\ldots \},
\]
i.e., if and only if $a k^2$ is not a Neumann eigenvalue. Moreover,
\[
d(a)=\# \left\{ \lambda_l:\ \lambda_l>\frac{1}{1+a k^2}\right\}.
\]
\item[(c)] If $k>0$ is non-resonant for the potential $q(x)=a\in \R$ then
\[
S(a)v_l=(\mathrm{Id}-(1+a k^2) K)^{-1} v_l = \frac{1}{1-(1+a k^2)\lambda_l}  v_l.
\]
\item[(d)] Let $a,b\in \R$ with $a<b$, and $k>0$ be non-resonant for $q(x)=a$ and $q(x)=b$. Then
\begin{align*}
\left( F, \left( S(b)-S(a) \right) F\right)_{H^1(\Omega)}
= \sum_{l=1}^\infty c_l \left(  F, v_l \right)_{H^1(\Omega)}^2,
\end{align*}
where $0\neq c_l\in \R$, and the number of negative $c_l$ is exactly $d(b)-d(a)$.
\end{enumerate}
\end{Lemma}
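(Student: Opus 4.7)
The plan is to treat the four items of the lemma essentially as consecutive consequences of the spectral decomposition of the compact self-adjoint operator $K$ on $H^1(\Omega)$, since under the constant-coefficient assumption $K_q=aK$, the whole analysis reduces to functional calculus on one fixed operator.

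For part (a), I would start from the identity $(Kv,w)_{H^1(\Omega)}=(\iota v,\iota w)_{L^2(\Omega)}=(v,w)_{L^2(\Omega)}$ and rewrite the eigenvalue relation $Kv=\lambda v$ weakly as
\[
(v,w)_{L^2(\Omega)}=\lambda\bigl((v,w)_{L^2(\Omega)}+(\nabla v,\nabla w)_{L^2(\Omega)}\bigr)\qquad\forall\,w\in H^1(\Omega).
\]
Rearranging gives $(\nabla v,\nabla w)_{L^2(\Omega)}=(\tfrac{1}{\lambda}-1)(v,w)_{L^2(\Omega)}$, which is exactly the weak form of the Neumann eigenvalue problem with eigenvalue $\tfrac{1}{\lambda}-1$. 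The correspondence goes both directions, and $v\in H^1(\Omega)$ is automatic. For part (b), non-resonance is equivalent to invertibility of $\mathrm{Id}-(1+ak^2)K$, and by the spectral theorem this holds iff $(1+ak^2)^{-1}\notin\sigma(K)=\{\lambda_l\}$. The counting formula for $d(a)$ then follows by translating via (a): the Neumann eigenvalues of $\Delta+k^2 a$ are $\mu_l=k^2 a+1-\tfrac{1}{\lambda_l}$, and $\mu_l>0\Leftrightarrow\lambda_l>(1+ak^2)^{-1}$ (handling the degenerate sign of $1+ak^2$ only means the set is interpreted so that no $\lambda_l$ qualifies, consistent with $d(a)=0$). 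Part (c) is an immediate functional calculus computation: $(\mathrm{Id}-(1+ak^2)K)v_l=(1-(1+ak^2)\lambda_l)v_l$, and the non-resonance guarantees this scalar is nonzero so we may invert.

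For part (d), I would expand $F=\sum_l\alpha_l v_l$ with $\alpha_l=(F,v_l)_{H^1(\Omega)}$ using the orthonormal basis $(v_l)$, apply (c) termwise, and use orthonormality to obtain
\[
\bigl(F,(S(b)-S(a))F\bigr)_{H^1(\Omega)}=\sum_{l=1}^\infty c_l\,\alpha_l^2,\quad c_l=\frac{1}{1-(1+bk^2)\lambda_l}-\frac{1}{1-(1+ak^2)\lambda_l}.
\]
Putting over a common denominator gives
\[
c_l=\frac{(b-a)k^2\lambda_l}{(1-(1+ak^2)\lambda_l)(1-(1+bk^2)\lambda_l)},
\]
whose numerator is strictly positive (since $b>a$ and $\lambda_l>0$), and whose denominator is nonzero by non-resonance, so $c_l\neq 0$. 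The sign of $c_l$ is therefore the sign of the product of the two denominator factors. Using $1+ak^2<1+bk^2$ together with the relation from (b), the factor $1-(1+ak^2)\lambda_l$ is negative precisely when $\lambda_l>(1+ak^2)^{-1}$, i.e. on exactly the $d(a)$ largest eigenvalues, and similarly for $b$. Hence exactly one factor is negative iff $\lambda_l$ lies in the window between $(1+bk^2)^{-1}$ and $(1+ak^2)^{-1}$, and the count of such $l$ is $d(b)-d(a)$ by (b).

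The only real subtlety is the careful sign bookkeeping in (d), in particular the (admittedly trivial) need to separate the $1+ak^2\leq 0$ edge case where $(1+ak^2)^{-1}$ should be read as $+\infty$ and $d(a)=0$; once that convention is settled, the counting argument reduces to comparing two monotone threshold sets in $\{\lambda_l\}$ and the conclusion $d(b)-d(a)$ drops out immediately. Everything else is direct spectral calculus, and no analytic perturbation theory or unique continuation is needed beyond what is absorbed in identifying eigenfunctions of $K$ with Neumann eigenfunctions of $\Delta$.
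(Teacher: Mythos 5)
Your proof is correct and takes essentially the same route as the paper: rewrite $Kv=\lambda v$ in weak form to identify $\sigma(K)$ with the Neumann spectrum of $\Delta$, reduce non-resonance and $d(a)$ to threshold conditions on the $\lambda_l$, diagonalize $S(b)-S(a)$ in the $H^1$-orthonormal eigenbasis $(v_l)$, and count sign changes of the resulting coefficients $c_l$. Your explicit common-denominator expression $c_l = (b-a)k^2\lambda_l/\bigl((1-(1+ak^2)\lambda_l)(1-(1+bk^2)\lambda_l)\bigr)$ is the correct one, and your convention of reading $(1+ak^2)^{-1}$ as $+\infty$ when $1+ak^2\le 0$ is exactly the bookkeeping needed to make the counting formula in (b) and the sign analysis in (d) cover all cases; the paper outsources that point to an external reference.
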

\begin{proof}
\begin{enumerate}
$Kv=\lambda v$ is equivalent to
\begin{align*}
\lefteqn{\int_\Omega \nabla v\cdot \nabla w\, dx - \left( \frac{1}{\lambda} -1 \right) \int_\Omega v w\, dx}\\
&=
\left( (\mathrm{Id} - K) v, w \right)_{H^1(\Omega)} - \left( \frac{1}{\lambda} -1 \right) \left( K v, w \right)_{H^1(\Omega)}\\
&= \left( (\mathrm{Id} -  \frac{1}{\lambda} K) v, w\right)_{H^1(\Omega)} = 0 \quad \text{ for all } w\in H^1(\Omega),
\end{align*}
which is the variational formulation equivalent to 
\[
-\Delta v=\left( \frac{1}{\lambda} -1 \right) v,\quad \partial_\nu v|_{\partial \Omega}=0.
\]
This proves (a). 

The first part of (b) and (c) are obvious. The second part of (b) has been proven in \cite[Lemma 2.1]{harrach2018helmholtz}.

To prove (d) note that for all $F\in H^1(\Omega)$
\[
F = \sum_{l=1}^\infty v_l \left( F, v_l \right)_{H^1(\Omega)},
\]
where the sum is convergent in $H^1(\Omega)$. Hence,
\begin{align*}
\lefteqn{\left( F, \left( S(b)-S(a) \right) F\right)_{H^1(\Omega)}}\\
&= \left( F, \left( S(b)-S(a) \right)  \sum_{l=1}^\infty v_l \left( F, v_l \right)_{H^1(\Omega)} \right)_{H^1(\Omega)}\\
&= \sum_{l=1}^\infty \left( \frac{1}{1-(1+b k^2)\lambda_l} - \frac{1}{1-(1+a k^2)\lambda_l}  \right) \left( F, v_l\right)_{H^1(\Omega)}^2.
\end{align*}
For the coefficients
\begin{align*}
c_l&:=\frac{1}{1-(1+b k^2)\lambda_l} - \frac{1}{1-(1+a k^2)\lambda_l}\\
&=\frac{(b-a)k^2 }{\left( \frac{1}{\lambda_l}-(1+b k^2) \right) \left(\frac{1}{\lambda_l}-(1+a k^2)\right)}
\end{align*}
we obviously have that $c_l\neq 0$ and that $c_l<0$ if and only if
\[
\frac{1}{1+a k^2} > \lambda_l > \frac{1}{1+b k^2}.
\]
By the second part of (b), the number of negative $c_l$ is exactly $d(b)-d(a)$.
\end{enumerate}
\end{proof}

\begin{proof}[Proof of Theorem~\ref{Thm:solution_operator_sharp_bound}.]
Using Lemma~\ref{lemma:sharp_bounds_proof} we have that
\[
V:=\mathrm{span}\{ v_l:\ c_l<0\}\subset H^1(\Omega),
\]
is a subspace of dimension $\mathrm{dim}(V)=d(b)-d(a)$, 
\begin{align*}
\left( F, \left( S(b)-S(a) \right) F\right)_{H^1(\Omega)} &< 0 \quad \text{ for all } F\in V \setminus \{0\}, \text{ and }\\
\left( F, \left( S(b)-S(a) \right) F\right)_{H^1(\Omega)} &\geq  0 \quad \text{ for all } F\perp_{H^1} V.
\end{align*}
Using, e.g., \cite[Lemma 3.2]{harrach2018helmholtz} this shows that $S(b)-S(a)$ has exactly $d(b)-d(a)$ negative eigenvalues
and thus proves Theorem~\ref{Thm:solution_operator_sharp_bound}(a).

Using that $\Lambda(b)-\Lambda(a)=\gamma (S(b)-S(a)) \gamma^*$, we also have that
\[
\int_{\partial \Omega} g \left( \Lambda(b)-\Lambda(a) \right) g\, dS \geq 0
\]
for all $g$ with $\gamma^* g\perp_{H^1} V$, which is equivalent to
\[
\left( g, \gamma v \right)_{L^2(\partial\Omega)}=\left( \gamma^* g, v \right)_{H^1(\Omega)} =0 \quad \text{ for all } v\in V,
\] 
and thus 
\[
\int_{\partial \Omega} g \left( \Lambda(b)-\Lambda(a) \right) g\, dS \geq 0 \quad \text{ for all } g\perp_{L^2} \gamma (V).
\]
Using $\mathrm{dim}(\gamma (V)) \leq \mathrm{dim}(V)=d(b)-d(a)$ and \cite[Cor.~3.3]{harrach2018helmholtz}), this shows that $\Lambda(b)-\Lambda(a)$ has 
at most $d(b)-d(a)$ negative eigenvalues and thus proves Theorem~\ref{Thm:solution_operator_sharp_bound}(b).
\end{proof}

\subsection{Helmholtz equation on the two-dimensional unit square}

We now consider the special case of the Helmholtz equation with constant parameter $q(x)=a\in \R$ on the two-dimensional unit square
\[
\Omega:=(0,1)^2
\]
and derive an infinite matrix representation for the Neumann-Dirichlet-operator $\Lambda(a)$.

For the unit square the Neumann eigenfunctions are well known:
\begin{Lemma}\label{lemma:Neumann_eigenfunctions_square}
For $l,m\in \mN_0$ we define 
\[
v_{l,m}:\Omega\to \R, \quad v_{l,m}(x,y):= d_l d_m \cos(\pi l x) \cos(\pi m y),
\]
with $d_0:=1$ and $d_j=\sqrt{2}$ for $j\in \mN$. The functions $v_{l,m}$ are Neumann eigenfunctions of the Laplacian
\begin{align*}
-\Delta v_{l,m}&=\pi ^2 (l^2 + m^2) v_{l,m}, \qquad \partial_\nu v_{l,m}|_{\partial \Omega}=0,
\end{align*}
and eigenfunctions of $K:\ H^1(\Omega)\to H^1(\Omega)$
\[
Kv_{l,m}=\lambda_{l,m} v_{l,m}\quad \text{ with } \quad \lambda_{l,m}:=\frac{1}{1+\pi^2 (l^2+m^2)}.
\]

$(v_{l,m})_{l,m\in \mN}$ is an orthonormal basis of $L^2(\Omega)$,
and $(\sqrt{\lambda_{l,m}} v_{l,m})_{l,m\in \mN}$ is an orthonormal basis of $H^1(\Omega)$.
\end{Lemma}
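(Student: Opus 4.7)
The plan is to establish each assertion by a short direct computation, using elementary Fourier analysis and the characterization of eigenfunctions of $K$ already proved in Lemma \ref{lemma:sharp_bounds_proof}(a).

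First I would verify the eigenvalue equation for $-\Delta$ by differentiating twice: since $\partial_{xx}\cos(\pi l x)=-\pi^2 l^2\cos(\pi l x)$ and analogously in $y$, one reads off $-\Delta v_{l,m}=\pi^2(l^2+m^2)v_{l,m}$. The Neumann condition is also immediate, because $\partial_x\cos(\pi l x)=-\pi l\sin(\pi l x)$ vanishes at $x=0$ and $x=1$ (and similarly for $y$), so $\partial_\nu v_{l,m}|_{\partial\Omega}=0$. Invoking Lemma \ref{lemma:sharp_bounds_proof}(a) with $\lambda=\lambda_{l,m}$ and the relation $\frac{1}{\lambda}-1=\pi^2(l^2+m^2)$ then gives $Kv_{l,m}=\lambda_{l,m}v_{l,m}$.

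Next I would check $L^2$-orthonormality by separating the double integral into a product of one-dimensional integrals $\int_0^1\cos(\pi l x)\cos(\pi l' x)\,dx$. These integrals are zero for $l\ne l'$, equal to $1$ for $l=l'=0$, and equal to $1/2$ for $l=l'\geq 1$, which is exactly compensated by the normalization constants $d_l d_{l'}$. Hence $(v_{l,m},v_{l',m'})_{L^2(\Omega)}=\delta_{ll'}\delta_{mm'}$. Completeness in $L^2(\Omega)$ follows from the standard completeness of the one-dimensional cosine system $\{d_l\cos(\pi l\,\cdot)\}_{l\in\mathbb{N}_0}$ in $L^2(0,1)$ (a special case of the Neumann spectral theorem on an interval, or Fourier series) together with a tensor product argument.

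For the $H^1$-statement I would compute the $H^1$-inner product directly using that $v_{l,m}$ is smooth and satisfies the Neumann boundary condition, so integration by parts yields
\[
(v_{l,m},v_{l',m'})_{H^1(\Omega)}=(v_{l,m},v_{l',m'})_{L^2}+(-\Delta v_{l,m},v_{l',m'})_{L^2}=\bigl(1+\pi^2(l^2+m^2)\bigr)\delta_{ll'}\delta_{mm'},
\]
and the bracket equals $\lambda_{l,m}^{-1}$, which shows that $(\sqrt{\lambda_{l,m}}\,v_{l,m})$ is $H^1$-orthonormal. For $H^1$-completeness, if $F\in H^1(\Omega)$ is $H^1$-orthogonal to every $\sqrt{\lambda_{l,m}}\,v_{l,m}$, the same integration by parts (valid for $F\in H^1$ against a smooth test function with vanishing Neumann trace) gives $(F,v_{l,m})_{H^1}=\lambda_{l,m}^{-1}(F,v_{l,m})_{L^2}=0$ for all $l,m$; by $L^2$-completeness this forces $F=0$.

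There is no real obstacle in this lemma: the only mildly subtle point is justifying the integration by parts $(\nabla F,\nabla v_{l,m})_{L^2}=(F,-\Delta v_{l,m})_{L^2}$ used in the $H^1$-completeness step, but this is immediate since $v_{l,m}\in C^\infty(\overline{\Omega})$ with $\partial_\nu v_{l,m}|_{\partial\Omega}=0$, which is precisely the setting where Green's first identity applied to $F\in H^1(\Omega)$ produces no boundary contribution.
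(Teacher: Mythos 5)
Your proof is correct and takes essentially the same route as the paper: verify the Neumann eigenvalue equation and $L^2$-orthonormality by direct computation, invoke Lemma \ref{lemma:sharp_bounds_proof}(a) to identify the $v_{l,m}$ as eigenfunctions of $K$, and then show $(v_{l,m},v_{l',m'})_{H^1(\Omega)}=\lambda_{l,m}^{-1}\delta_{ll'}\delta_{mm'}$. The only cosmetic difference is that you obtain this $H^1$ identity by integration by parts directly, whereas the paper derives it algebraically from the relation $Kv_{l,m}=\lambda_{l,m}v_{l,m}$ together with $(Kv,w)_{H^1}=(v,w)_{L^2}$; your explicit treatment of $H^1$-completeness via the $L^2$-completeness argument is a welcome spelled-out detail that the paper leaves implicit.
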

\begin{proof}
It is easily checked that the functions $(v_{l,m})_{l,m\in \mN}$ are Neumann eigenfunctions and that they form an orthonormal basis of $L^2(\Omega)$.
Lemma \ref{lemma:sharp_bounds_proof}, that the $(v_{l,m})_{l,m\in \mN}$ are also eigenfunctions of $K$, and this yields that
\[
\left( v_{l,m}, v_{l',m'} \right)_{H^1(\Omega)} = \frac{1}{\lambda_{l,m}} \int_{\Omega} v_{l,m} v_{l',m'}\, dx = \frac{1}{\lambda_{l,m}} \delta_{l,l'} \delta_{m,m'},
\]
which shows that $(\sqrt{\lambda_{l,m}} v_{l,m})_{l,m\in \mN}$ is an orthonormal basis of $H^1(\Omega)$.
\end{proof}

We can now expand the Neumann-Dirichlet operator $\Lambda(a)$ in an orthonormal basis of cosine functions on the four sides of $\partial \Omega$.
\begin{Lemma}\label{lemma:NtD_square_matrix}
Define $g_s:\ \partial \Omega\to \R$ ($s\in \mN_0$) by setting for all $j\in \mN_0$
\begin{align*}
g_{4j+0}(x,y)&:= d_j \cos ( j \pi y) \mathbf{1}_{\Sigma_\text{right}}(x,y), &  \Sigma_\text{right}&:=\{1\}\times (0,1),\\
g_{4j+1}(x,y)&:= d_j \cos ( j \pi (1-x))  \mathbf{1}_{\Sigma_\text{top}}(x,y), &  \Sigma_\text{top}&:=(0,1)\times \{1\},\\
g_{4j+2}(x,y)&:=d_j \cos ( j \pi (1-y)) \mathbf{1}_{\Sigma_\text{left}}(x,y),&  \Sigma_\text{left}&:=\{0\}\times (0,1),\\
g_{4j+3}(x,y)&:=d_j \cos ( j \pi x)  \mathbf{1}_{\Sigma_\text{bottom}}(x,y),&  \Sigma_\text{bottom}&:=(0,1)\times \{0\}.
\end{align*}
Then $(g_s)_{s\in \mN_0}\subseteq L^2(\partial \Omega)$ is an orthonormal basis of $L^2(\partial \Omega)$.
 
The infinite matrix representation of $\Lambda(a)$ with respect to this basis is given by $4\times 4$-blocks of the form 
\begin{align*}
\left(\int_{\partial \Omega} g_{4i+p}  \Lambda(a) g_{4j+r}\, ds\right)_{p,r=0,\ldots,3}
&=\left( \begin{array}{c c c c}
M_0 & M_1 & M_2 & M_3\\
M_3 & M_0 & M_1 & M_2\\
M_2 & M_3 & M_0 & M_1\\
M_1 & M_2 & M_3 & M_0
 \end{array}\right)
\end{align*}
where (for $i,j\in \mN_0$)
\begin{align*}
M_0&= \left\{ \begin{array}{r l} 
\delta_{ij} \frac{ \coth(\sqrt{\pi^2 i^2- a k^2})}{ \sqrt{\pi^2 i^2- a k^2}} & \text{for $i^2>\frac{a k^2}{\pi^2}$,}\\[+1ex]
-\delta_{ij} \frac{ \cot(\sqrt{ak^2-\pi^2 i^2})}{ \sqrt{ak^2-\pi^2 i^2}} & \text{for $i^2<\frac{a k^2}{\pi^2}$,}
\end{array}\right. &
M_1&=\frac{(-1)^i d_i d_j }{\pi^2 (i^2+j^2)- a k^2},\\[+1ex]
M_2& = \left\{ \begin{array}{r l} 
\delta_{ij} (-1)^i \frac{ \operatorname{csch}(\sqrt{\pi^2 i^2- a k^2})}{\sqrt{\pi^2 i^2- a k^2}} & \text{for $i^2>\frac{a k^2}{\pi^2}$,}\\[+1ex]
-\delta_{ij} (-1)^i \frac{  \csc( \sqrt{ak^2-\pi^2 i^2})}{ \sqrt{a k^2-\pi^2 i^2} } & \text{for $i^2<\frac{a k^2}{\pi^2}$,}
\end{array}\right. &
M_3&=\frac{(-1)^j d_i d_j }{\pi^2 (i^2+j^2)- a k^2}.
\end{align*}
\end{Lemma}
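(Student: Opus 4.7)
The four open sides $\Sigma_{\text{right}}, \Sigma_{\text{top}}, \Sigma_{\text{left}}, \Sigma_{\text{bottom}}$ partition $\partial\Omega$ up to a null set, giving an orthogonal decomposition of $L^2(\partial\Omega)$ into the corresponding four subspaces. Each $g_{4j+r}$ is supported on a single side, and restricted to that side it equals a member of the standard orthonormal cosine basis $(d_j\cos(j\pi\cdot))_{j\geq 0}$ of $L^2(0,1)$. Orthonormality across sides is then automatic, and completeness follows side-by-side.

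\textbf{Reduction to one block-row by symmetry.} The $90^\circ$ rotation $R:(x,y)\mapsto(1-y,x)$ is an isometry of $\Omega$ cyclically permuting the sides right $\to$ top $\to$ left $\to$ bottom $\to$ right. The seemingly-asymmetric factors $1-x$ and $1-y$ in the definitions of $g_{4j+1}, g_{4j+2}$ are arranged precisely so that the pullback $T:g\mapsto g\circ R^{-1}$ satisfies $Tg_{4j+r} = g_{4j+((r+1)\bmod 4)}$, as one verifies by direct substitution. Because $\Delta+k^2 a$ and the Neumann boundary condition are $R$-invariant (the coefficient being constant), $T$ is unitary on $L^2(\partial\Omega)$ and commutes with $\Lambda(a)$. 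Consequently
\[
(g_{4i+p}, \Lambda(a) g_{4j+r})_{L^2(\partial\Omega)} = (g_{4i+(p+1)}, \Lambda(a) g_{4j+(r+1)})_{L^2(\partial\Omega)},
\]
and each $(p,r)$-block depends only on $(r-p)\bmod 4$, which is exactly the block-circulant pattern in the statement. It therefore suffices to compute the four blocks $M_r := (g_{4i+0}, \Lambda(a) g_{4j+r})_{L^2(\partial\Omega)}$ for $r=0,1,2,3$.

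\textbf{Evaluating the four blocks by separation of variables.} Fix $r$ and let $u = \Lambda(a) g_{4j+r}$. Since the Neumann data depends on the tangential variable as a single cosine, separation of variables produces $u(x,y)=f(x)\cos(j\pi y)$ when $r\in\{0,2\}$ and $u(x,y)=\cos(j\pi x)h(y)$ when $r\in\{1,3\}$, with the profile satisfying $f'' + (ak^2 - j^2\pi^2)f=0$ on $(0,1)$ and Neumann conditions at $0,1$ that record the side on which the data is placed. The sign of $j^2\pi^2-ak^2$ produces the hyperbolic versus trigonometric case split in the statement. For $M_0$ and $M_2$ the test function $g_{4i+0}$ has the same cosine profile $\cos(i\pi y)$ on the right edge, so the pairing collapses to $\delta_{ij}f(1)/d_j$; substituting the explicit $f(1)$ obtained from the boundary conditions yields the $\coth/\cot$ formula for $M_0$ and the $\operatorname{csch}/\csc$ formula for $M_2$, the extra $(-1)^i$ in the latter coming from the identity $g_{4j+2}(0,y)=(-1)^j d_j\cos(j\pi y)$.

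\textbf{Main obstacle: the cross-side blocks.} For $r\in\{1,3\}$ the trace $u|_{x=1}=(-1)^j h(y)$ is no longer a pure cosine, and the pairing becomes $(-1)^j\int_0^1 d_i\cos(i\pi y)h(y)\,dy$. Two integrations by parts give the elementary identity
\[
\int_0^1\cos(i\pi y)\cosh(\alpha y)\,dy = \frac{(-1)^i\alpha\sinh(\alpha)}{\alpha^2 + i^2\pi^2},
\]
together with its trigonometric counterpart obtained by replacing $\cosh,\sinh$ by $\cos,\sin$. The normalizing factor $\alpha\sinh(\alpha)$ (respectively $\beta\sin(\beta)$) appearing in $h$ then cancels against the factor generated by the integral, and both sub-cases collapse to the single closed form $(-1)^i d_i d_j/(\pi^2(i^2+j^2)-ak^2)$ for $M_1$; the analogous computation with data on the bottom side gives $M_3$ with $(-1)^j$ in place of $(-1)^i$. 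This cross-case collapse is the only nontrivial point of the proof, and it is what allows the matrix formula to avoid a further hyperbolic/trigonometric case split in $M_1$ and $M_3$.
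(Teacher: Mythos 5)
Your proof is correct but takes a genuinely different route from the paper's. The paper expands $\Lambda(a) = \gamma S(a)\gamma^*$ in the Neumann eigenfunction basis $(v_{l,m})$ of the Laplacian on the square: it computes the boundary overlaps $I_p(j,l,m)=\int_{\partial\Omega} g_{4j+p}\,v_{l,m}\,ds$, writes each matrix entry as a double series $\sum_{l,m}\frac{I_p(i,l,m)I_r(j,l,m)}{\pi^2(l^2+m^2)-ak^2}$, and then evaluates the surviving single series using the classical cotangent/cosecant partial-fraction expansions. You instead solve the Helmholtz Neumann problem in closed form by separation of variables (one hyperbolic or trigonometric profile in the normal variable per input mode) and pair the resulting Dirichlet traces against the test cosines directly, using two integrations by parts to evaluate $\int_0^1\cos(i\pi y)\cosh(\alpha y)\,dy$. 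You also add a rotation-symmetry observation the paper does not make explicit: the pullback by the $90^\circ$ rotation permutes the $g_{4j+r}$ cyclically in $r$ and commutes with $\Lambda(a)$, so the sixteen blocks reduce to four. This both halves the bookkeeping and explains \emph{why} the matrix is block-circulant, rather than having that structure fall out of sixteen separate series evaluations. The paper's approach is more tightly integrated with the spectral machinery (Lemmas 4.3 and 4.4) developed earlier in the section, at the cost of invoking external series identities from \cite{remmert1991theory}; yours is more elementary and self-contained, at the cost of re-deriving the Helmholtz solution explicitly. Both are valid; yours is arguably more transparent about where each factor of $(-1)^i$, $(-1)^j$, and the $\coth$-vs-$\operatorname{csch}$ distinction comes from.

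One small point of bookkeeping worth tightening: in the $M_2$ block you attribute the $(-1)^i$ to the identity $g_{4j+2}(0,y)=(-1)^j d_j\cos(j\pi y)$, which yields a $(-1)^j$; that this equals $(-1)^i$ is only because the $\delta_{ij}$ kills the off-diagonal terms. It would be cleaner to keep the factor as $(-1)^j$ until after the $\delta_{ij}$ appears, then replace $j$ by $i$.
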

\begin{proof}
Clearly, $(g_s)_{s\in \mN_0}\subseteq L^2(\partial \Omega)$ is an orthonormal basis of $L^2(\partial \Omega)$, and 
\begin{align*}
I_0(j,l,m)&:=\int_{\partial \Omega} g_{4j+0}\, v_{l,m}|_{\partial \Omega}\, ds 
= (-1)^l d_l \delta_{jm},\\
I_1(j,l,m)&:=\int_{\partial \Omega} g_{4j+1}\, v_{l,m}|_{\partial \Omega}\, ds 
= (-1)^{m+j} d_m \delta_{jl},\\
I_2(j,l,m)&:=\int_{\partial \Omega} g_{4j+2}\, v_{l,m}|_{\partial \Omega}\, ds 
= (-1)^j d_l \delta_{jm},\\
I_3(j,l,m)&:=\int_{\partial \Omega} g_{4j+3}\, v_{l,m}|_{\partial \Omega}\, ds 
= d_m \delta_{jl}.
\end{align*}

Using that $(\sqrt{\lambda_{lm}} v_{lm})_{l,m\in \mN}$ is an orthonormal basis of $H^1(\Omega)$ that diagonalizes the solution operator
(cf.\ lemma~\ref{lemma:sharp_bounds_proof}(c))
we have that
\begin{align*}
\int_{\partial \Omega} g_{4i+p}  \Lambda(a) g_{4j+r}\, ds&= \left( \gamma^* g_{4i+p} , S(a) \gamma^* g_{4j+r} \right)_{H^1(\Omega)}\\
&= \sum_{l,m=0}^\infty \frac{\lambda_{lm}}{1-(1+ak^2)\lambda_{lm}} I_p(i,l,m) I_r(j,l,m) \\
&= \sum_{l,m=0}^\infty \frac{1}{\pi^2 (l^2+m^2)- a k^2} I_p(i,l,m) I_r(j,l,m).
\end{align*}
The assertion then follows from a simple calculation using the sum formulas 
\begin{align*}
\sum_{m=0}^\infty \frac{d_m^2}{\pi^2 m^2+c} &= \left\{ \begin{array}{r l}  -\frac{ \cot(\sqrt{-c})}{\sqrt{-c}} & \text{ for $c<0$, $\sqrt{-c}\not\in \pi \mN$,}\\
 \frac{  \coth(\sqrt{c})}{\sqrt{c} } & \text{ for $c>0$,}
\end{array}\right.\\
\sum_{m=0}^\infty (-1)^m \frac{d_m^2}{\pi^2 m^2+c} &= \left\{ \begin{array}{r l} -\frac{  \csc( \sqrt{-c})}{ \sqrt{-c} } & \text{ for $c<0$, $\sqrt{-c}\not\in \pi \mN$,}\\
 \frac{ \operatorname{csch}(\sqrt{c})}{\sqrt{c}} & \text{ for $c>0$,}
\end{array}\right.
\end{align*}
(see e.g. \cite[formulas (1) on p. 327 and (4) on p. 329]{remmert1991theory}).
\end{proof}

\subsection{Numerical evaluation of the dimension bound}

We still consider the special case of the Helmholtz equation on the unit square $\Omega=(0,1)^2$ with constant parameter $q(x)=a\in \R$, resp., $q(x)=b\in \R$,
and fix $k:=1$ without loss of generality. It follows from lemma~\ref{lemma:sharp_bounds_proof} and lemma \ref{lemma:Neumann_eigenfunctions_square} that
resonances occur when $a$ or $b$ equals $\pi^2 (l^2+m^2)$ with $l,m\in \mN_0$, and that
\begin{align*}
d(b)-d(a) 
&= \#\{ l,m\in \mN_0:\ a < \pi^2 (l^2+m^2)< b\}.
\end{align*}

We know from Theorem~\ref{Thm:solution_operator_sharp_bound} (and the more general Theorem~\ref{thm_monotonicity_dimension})
that $\Lambda(b)-\Lambda(a)$ will have at most $d(b)-d(a)$ negative eigenvalues. Moreover, we know
from Theorem~\ref{thm_monotonicity_dimension_example} that this bound is achieved, when $a$ and $b$ are sufficiently close together and only slightly smaller, resp., larger than 
a Neumann eigenvalue $\pi^2 (l^2+m^2)$, and $d(b)-d(a)$ will then be the multiplicity of this Neumann eigenvalue which can 
attain any even positive integer.

We will now numerically evaluate how the number of negative eigenvalues of $\Lambda(b)-\Lambda(a)$ behaves. For this end we use the numerical programming language Matlab to calculate 
a $1000\times 1000$ matrix approximating $\Lambda(b)-\Lambda(a)$ using the matrix representation formula in lemma \ref{lemma:NtD_square_matrix} for $i,j=0,\ldots, 249$.
We estimated the error in this finite dimensional approximation to be below $\delta=10^{-5}$ in the spectral norm by comparing $\Lambda(b)-\Lambda(a)$ to its upper left $500\times 500$ entries (filled up by zeros to a $1000\times 1000$ matrix). 
Accordingly, we considered eigenvalues below $-\delta$ to be negative and counted their number (with multiplicity).

Figure \ref{fig:Comparison_Plot} shows this numerically computed number of negative eigenvalues of $\Lambda(b)-\Lambda(a)$ and the theoretical bound $d(b)-d(a)$ as a function of $b\in \mR$ for $a=-10$ (top left), $a=10$ (top right), $a=100$ (bottom left), and $a=200$ (bottom right). Whenever $b$ crosses an eigenvalue $\pi^2 (l^2+m^2)$ with $l,m\in \mN_0$ the theoretical bound $d(b)-d(a)$ increases by the multiplicity of this eigenvalue. The plots indicate that the number of negative eigenvalue $\Lambda(b)-\Lambda(a)$ also increases by the 
multiplicity of this eigenvalue but that there is an additional effect decreasing the number of negative eigenvalues when $b-a$ increases.

\begin{figure}
\begin{center}
\begin{tabular}{c c}
\mbox{\includegraphics[width=5.5cm]{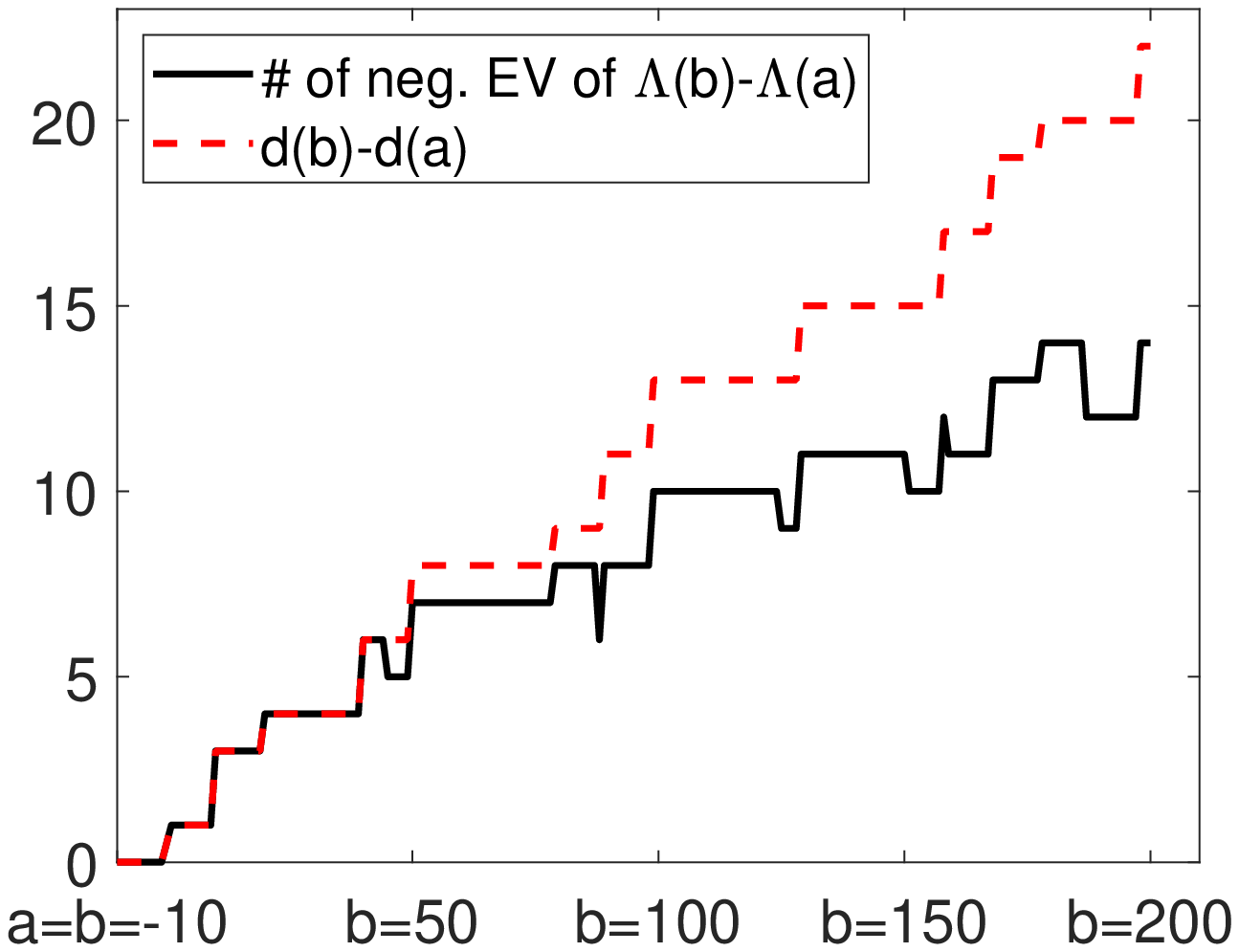}} &
\mbox{\includegraphics[width=5.5cm]{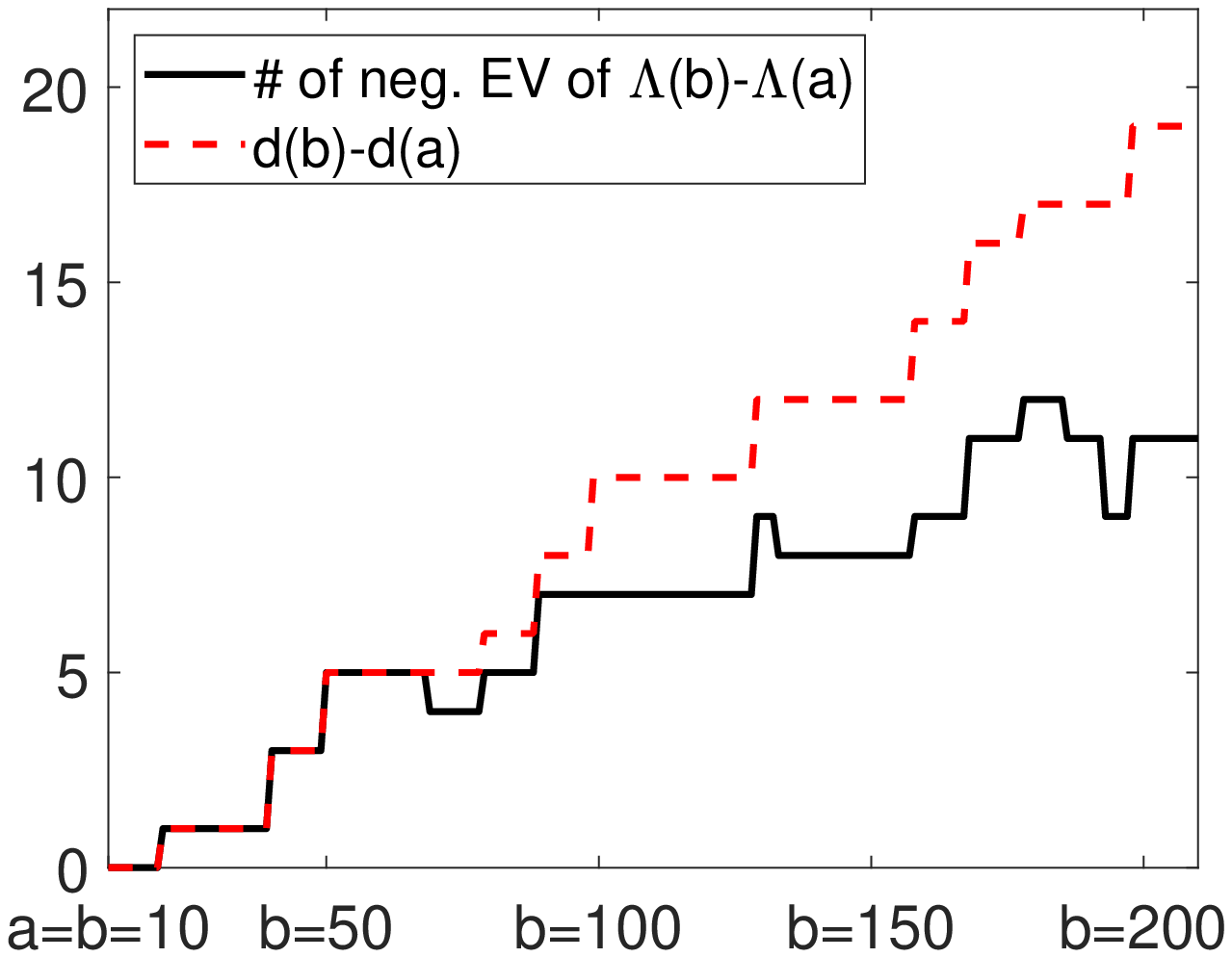}}\\
\mbox{\includegraphics[width=5.5cm]{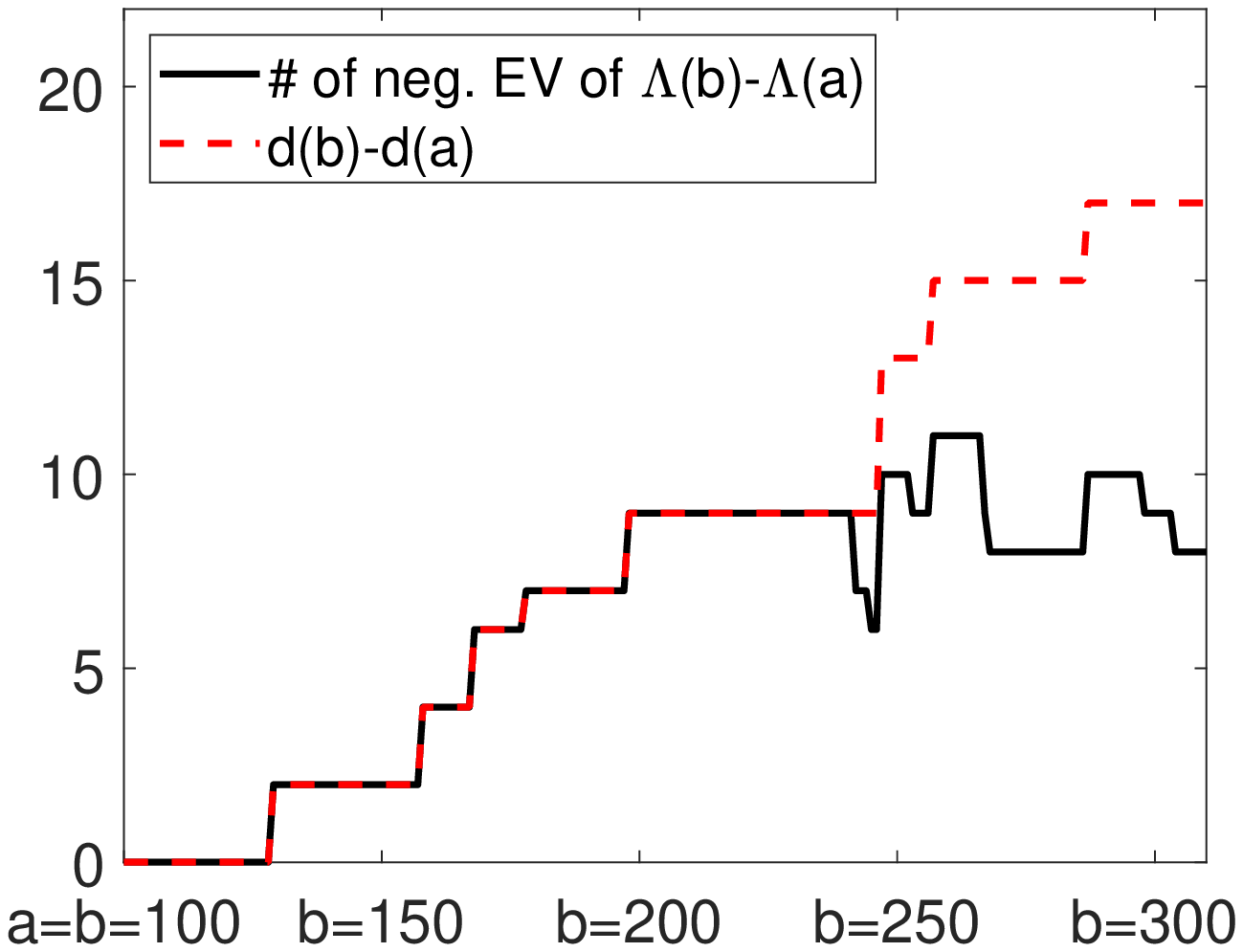}} &
\mbox{\includegraphics[width=5.5cm]{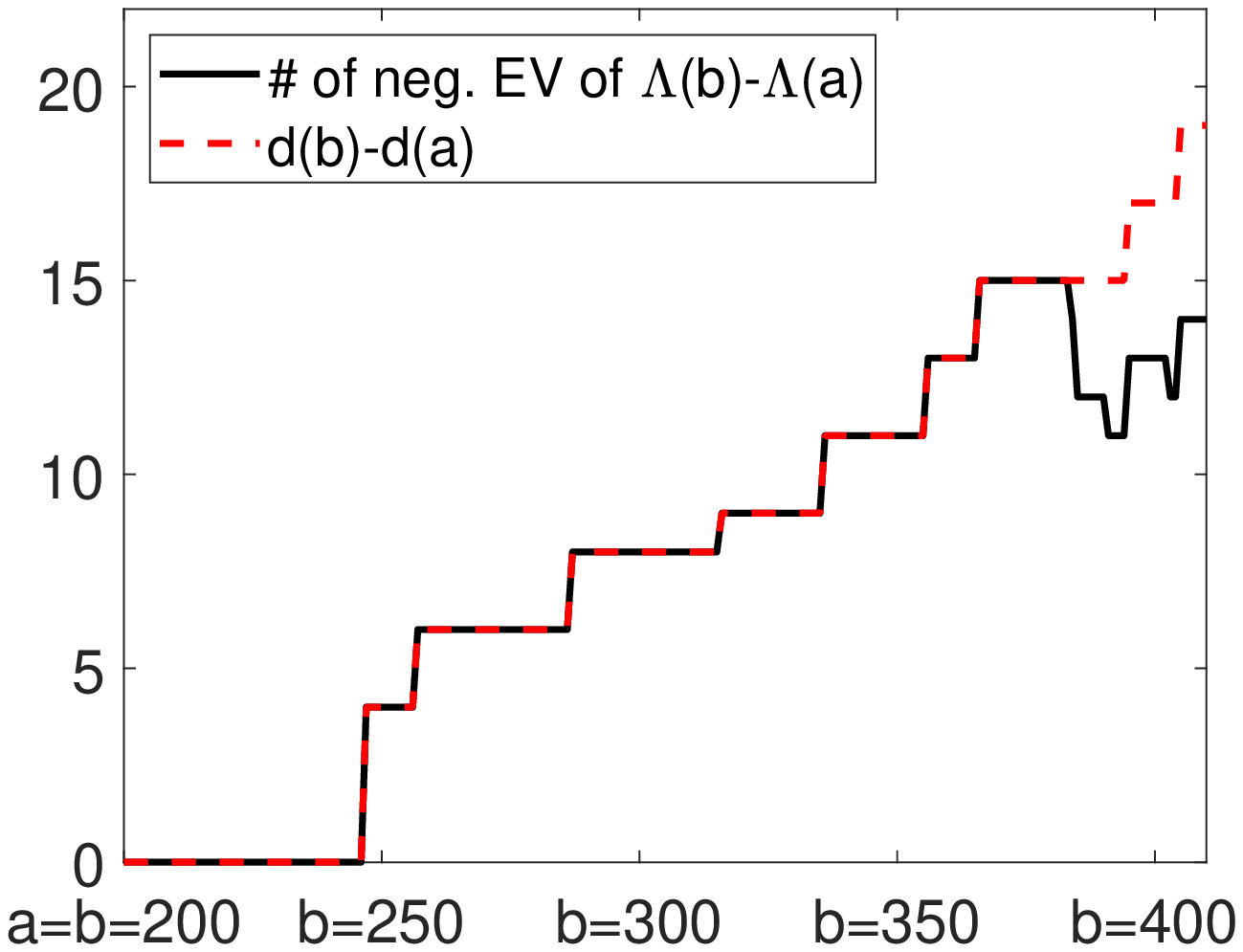}}
\end{tabular}
\end{center}
\caption{Comparison of the numerically calculated number of negative eigenvalues of $\Lambda(b)-\Lambda(a)$ 
 with the theoretical bound $d(b)-d(a)$.}
\label{fig:Comparison_Plot}
\end{figure}

To further investigate this additional effect, figure \ref{fig:EV_plot} shows the values of the eigenvalues of $\Lambda(b)-\Lambda(a)$ as a function of $b$ for fixed $a=-10$. More precisely, for each integer $b=-10,-9,\ldots,200$ (excluding the resonance $b=0$), the black dots are plotted at the position $(b,\lambda_j(a,b))$ where 
$\lambda_j(a,b)$, $j=1,\ldots,1000$, are the numerically calculated eigenvalues of $\Lambda(b)-\Lambda(a)$.     
The red dashed lines show the positions of the Neumann eigenvalues. Whenever $b$ crosses an eigenvalue, new negative eigenvalues of $\Lambda(b)-\Lambda(a)$ appear.
But at the same time the values of the eigenvalues increase with $b-a$, and it seems that negative eigenvalues can become positive again which would explain the drops in the number of negative eigenvalues observed in figure \ref{fig:Comparison_Plot}. 

\begin{figure}
\begin{center}
\begin{tabular}{c}
\mbox{\includegraphics[width=12.25cm,trim=20 0 10 0, clip]{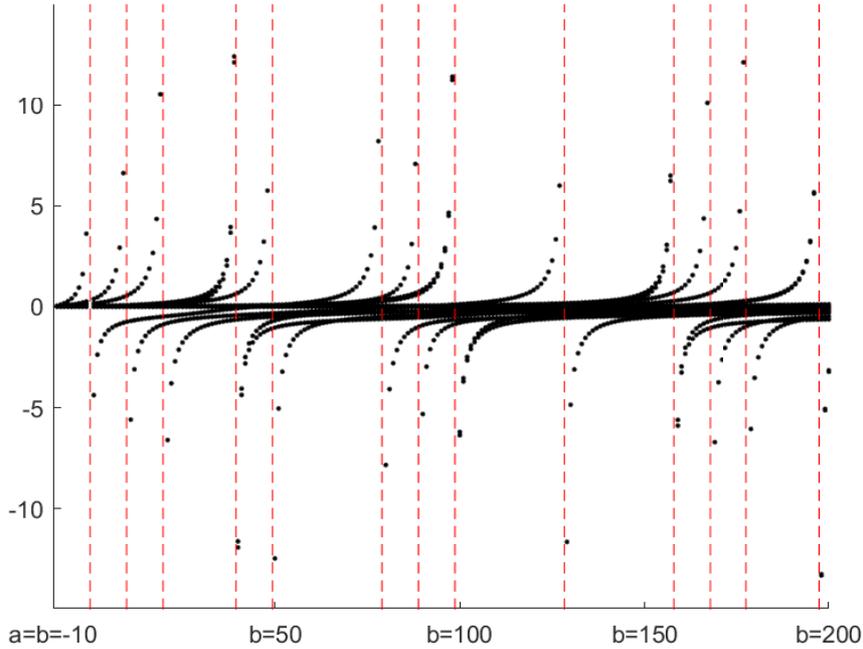}}
\end{tabular}
\end{center}
\caption{Plot of the numerically calculated eigenvalues of $\Lambda(b)-\Lambda(a)$ (black dots) 
as a function of $b$ for fixed $a:=-10$.}
\label{fig:EV_plot}
\end{figure}

Let us stress however that this numerical experiment is only an indication of what might happen to stipulate further research. We do not have a rigorous proof that the observed drop in the number of negative eigenvalues really exists. $\Lambda(b)-\Lambda(a)$ is a compact operator with an infinite number of eigenvalues accumulating at zero, and we cannot rigorously rule out the possibility that there exist more negative eigenvalues (up to the theoretically proven bound $d(b)-d(a)$) that we did not find due to their absolute values being below the numerical precision level.

\bibliographystyle{alpha}
\bibliography{literaturliste}

\newcommand{\etalchar}[1]{$^{#1}$}
\begin{thebibliography}{VMC{\etalchar{+}}17}

\bibitem[AH13]{arnold2013unique}
Lilian Arnold and Bastian Harrach.
\newblock Unique shape detection in transient eddy current problems.
\newblock {\em Inverse Problems}, 29(9):095004, 2013.

\bibitem[BHHM17]{barth2017detecting}
Andrea Barth, Bastian Harrach, Nuutti Hyv{\"o}nen, and Lauri Mustonen.
\newblock Detecting stochastic inclusions in electrical impedance tomography.
\newblock {\em Inverse Problems}, 33(11):115012, 2017.

\bibitem[BHKS18]{brander2018monotonicity}
Tommi Brander, Bastian Harrach, Manas Kar, and Mikko Salo.
\newblock Monotonicity and enclosure methods for the {$p$}-{L}aplace equation.
\newblock {\em SIAM J. Appl. Math.}, 78(2):742--758, 2018.

\bibitem[BS12]{berezin2012schrodinger}
Feliks~Aleksandrovich Berezin and M~Shubin.
\newblock {\em The {S}chr{\"o}dinger Equation}, volume~66.
\newblock Springer Science \& Business Media, 2012.

\bibitem[DS67]{dunford1967linear}
N.~Dunford and J.~T. Schwartz.
\newblock {\em Linear operators {I}--{II}}.
\newblock Interscience Publishers, 3rd printing, 1967.

\bibitem[Gar17]{garde2017comparison}
Henrik Garde.
\newblock Comparison of linear and non-linear monotononicity-based shape
  reconstruction using exact matrix characterizations.
\newblock {\em Inverse Problems in Science and Engineering}, 2017.

\bibitem[Geb08]{gebauer2008localized}
Bastian Gebauer.
\newblock Localized potentials in electrical impedance tomography.
\newblock {\em Inverse Probl. Imaging}, 2(2):251--269, 2008.

\bibitem[GH18]{griesmaier2018monotonicity}
Roland Griesmaier and Bastian Harrach.
\newblock Monotonicity in inverse medium scattering on unbounded domains.
\newblock {\em SIAM J. Appl. Math}, 78(5):2533--2557, 2018.

\bibitem[GM08]{gesztesy2008robin}
Fritz Gesztesy and Marius Mitrea.
\newblock Generalized {R}obin boundary conditions, {R}obin-to-{D}irichlet maps,
  and {K}rein-type resolvent formulas for {S}chr\"{o}dinger operators on
  bounded {L}ipschitz domains.
\newblock In {\em Perspectives in partial differential equations, harmonic
  analysis and applications}, volume~79 of {\em Proc. Sympos. Pure Math.},
  pages 105--173. Amer. Math. Soc., Providence, RI, 2008.

\bibitem[GN13]{grebenkov2013geometrical}
Denis~S Grebenkov and B-T Nguyen.
\newblock Geometrical structure of laplacian eigenfunctions.
\newblock {\em SIAM Review}, 55(4):601--667, 2013.

\bibitem[GS17]{garde2017convergence}
Henrik Garde and Stratos Staboulis.
\newblock Convergence and regularization for monotonicity-based shape
  reconstruction in electrical impedance tomography.
\newblock {\em Numerische Mathematik}, 135(4):1221--1251, 2017.

\bibitem[GS19]{garde2019regularized}
Henrik Garde and Stratos Staboulis.
\newblock The regularized monotonicity method: Detecting irregular indefinite
  inclusions.
\newblock {\em Inverse Probl.\ Imaging}, 13(1):93--116, 2019.

\bibitem[Har09]{harrach2009uniqueness}
Bastian Harrach.
\newblock On uniqueness in diffuse optical tomography.
\newblock {\em Inverse Problems}, 25:055010 (14pp), 2009.

\bibitem[Har12]{harrach2012simultaneous}
Bastian Harrach.
\newblock Simultaneous determination of the diffusion and absorption
  coefficient from boundary data.
\newblock {\em Inverse Probl. Imaging}, 6(4):663--679, 2012.

\bibitem[Har19]{harrach2019uniqueness}
Bastian Harrach.
\newblock Uniqueness and {L}ipschitz stability in electrical impedance
  tomography with finitely many electrodes.
\newblock {\em Inverse Problems}, 35(2):024005, 2019.

\bibitem[HL18]{harrach2018fractional}
Bastian Harrach and Yi-Hsuan Lin.
\newblock Monotonicity-based inversion of the fractional {S}chr\"odinger
  equation {I}. {P}ositive potentials.
\newblock {\em arXiv preprint arXiv:1711.05641}, 2018.

\bibitem[HL19]{harrach2019fractional}
Bastian Harrach and Yi-Hsuan Lin.
\newblock Monotonicity-based inversion of the fractional {S}chr\"odinger
  equation {II}. {G}eneral potentials and stability.
\newblock {\em arXiv preprint arXiv:1903.08771}, 2019.

\bibitem[HLL18]{harrach2018localizing}
Bastian Harrach, Yi-Hsuan Lin, and Hongyu Liu.
\newblock On localizing and concentrating electromagnetic fields.
\newblock {\em SIAM J. Appl. Math}, 78(5):2558--2574, 2018.

\bibitem[HLU15]{harrach2015combining}
Bastian Harrach, Eunjung Lee, and Marcel Ullrich.
\newblock Combining frequency-difference and ultrasound modulated electrical
  impedance tomography.
\newblock {\em Inverse Problems}, 31(9):095003, 2015.

\bibitem[HM16]{harrach2016enhancing}
Bastian Harrach and Mach~Nguyet Minh.
\newblock Enhancing residual-based techniques with shape reconstruction
  features in electrical impedance tomography.
\newblock {\em Inverse Problems}, 32(12):125002, 2016.

\bibitem[HM18]{harrach2018monotonicity}
Bastian Harrach and Mach~Nguyet Minh.
\newblock Monotonicity-based regularization for phantom experiment data in
  electrical impedance tomography.
\newblock In {\em New Trends in Parameter Identification for Mathematical
  Models}, pages 107--120. Springer, 2018.

\bibitem[HM19]{harrach2019global}
Bastian Harrach and Houcine Meftahi.
\newblock Global uniqueness and {L}ipschitz-stability for the inverse {R}obin
  transmission problem.
\newblock {\em SIAM J. Appl. Math.}, 79(2):525--550, 2019.

\bibitem[HPS]{harrach2018helmholtz}
Bastian Harrach, Valter Pohjola, and Mikko Salo.
\newblock Monotonicity and local uniqueness for the helmholtz equation.
\newblock {\em to appear in Anal. PDE}.

\bibitem[HS10]{harrach2010exact}
Bastian Harrach and Jin~Keun Seo.
\newblock Exact shape-reconstruction by one-step linearization in electrical
  impedance tomography.
\newblock {\em SIAM Journal on Mathematical Analysis}, 42(4):1505--1518, 2010.

\bibitem[HU13]{harrach2013monotonicity}
Bastian Harrach and Marcel Ullrich.
\newblock Monotonicity-based shape reconstruction in electrical impedance
  tomography.
\newblock {\em SIAM Journal on Mathematical Analysis}, 45(6):3382--3403, 2013.

\bibitem[HU15]{harrach2015resolution}
Bastian Harrach and Marcel Ullrich.
\newblock Resolution guarantees in electrical impedance tomography.
\newblock {\em IEEE Trans. Med. Imaging}, 34:1513--1521, 2015.

\bibitem[HU17]{harrach2017local}
Bastian Harrach and Marcel Ullrich.
\newblock Local uniqueness for an inverse boundary value problem with partial
  data.
\newblock {\em Proceedings of the American Mathematical Society},
  145(3):1087--1095, 2017.

\bibitem[HW08]{hardy2008introduction}
G.~H. Hardy and E.~M. Wright.
\newblock {\em An introduction to the theory of numbers}.
\newblock Oxford University Press, Oxford, sixth edition, 2008.

\bibitem[Kat95]{kato1995perturbation}
Tosio Kato.
\newblock {\em Perturbation theory for linear operators}.
\newblock Classics in Mathematics. Springer-Verlag, Berlin, 1995.

\bibitem[MVVT16]{maffucci2016novel}
Antonio Maffucci, Antonio Vento, Salvatore Ventre, and Antonello Tamburrino.
\newblock A novel technique for evaluating the effective permittivity of
  inhomogeneous interconnects based on the monotonicity property.
\newblock {\em IEEE Transactions on Components, Packaging and Manufacturing
  Technology}, 6(9):1417--1427, 2016.

\bibitem[Rem91]{remmert1991theory}
Reinhold Remmert.
\newblock {\em Theory of complex functions}, volume 122 of {\em Graduate Texts
  in Mathematics}.
\newblock Springer-Verlag, New York, 1991.
\newblock Translated from the second German edition by Robert B. Burckel,
  Readings in Mathematics.

\bibitem[RR04]{renardy2004introduction}
Michael Renardy and Robert~C. Rogers.
\newblock {\em An introduction to partial differential equations}, volume~13 of
  {\em Texts in Applied Mathematics}.
\newblock Springer-Verlag, New York, second edition, 2004.

\bibitem[RS72]{RSI}
Michael Reed and Barry Simon.
\newblock {\em Methods of modern mathematical physics, Volume I: Functional
  analysis}.
\newblock Academic Press, San Diego, 1972.

\bibitem[RS78]{reed1978methods}
Michael Reed and Barry Simon.
\newblock {\em Methods of modern mathematical physics. {IV}. {A}nalysis of
  operators}.
\newblock Academic Press, New York-London, 1978.

\bibitem[SKJ{\etalchar{+}}18]{seo2018learning}
Jin~Keun Seo, Kang~Cheol Kim, Ariungerel Jargal, Kyounghun Lee, and Bastian
  Harrach.
\newblock A learning-based method for solving ill-posed nonlinear inverse
  problems: a simulation study of lung eit.
\newblock {\em arXiv preprint arXiv:1810.10112}, 2018.

\bibitem[SUG{\etalchar{+}}17]{su2017monotonicity}
Zhiyi Su, Lalita Udpa, Gaspare Giovinco, Salvatore Ventre, and Antonello
  Tamburrino.
\newblock Monotonicity principle in pulsed eddy current testing and its
  application to defect sizing.
\newblock In {\em Applied Computational Electromagnetics Society
  Symposium-Italy (ACES), 2017 International}, pages 1--2. IEEE, 2017.

\bibitem[TR02]{tamburrino2002new}
Antonello Tamburrino and Guglielmo Rubinacci.
\newblock A new non-iterative inversion method for electrical resistance
  tomography.
\newblock {\em Inverse Problems}, 18(6):1809, 2002.

\bibitem[TSV{\etalchar{+}}16]{tamburrino2016monotonicity}
Antonello Tamburrino, Zhiyi Sua, Salvatore Ventre, Lalita Udpa, and Satish~S
  Udpa.
\newblock Monotonicity based imang method in time domain eddy current testing.
\newblock {\em Electromagnetic Nondestructive Evaluation (XIX)}, 41:1, 2016.

\bibitem[VMC{\etalchar{+}}17]{ventre2017design}
Salvatore Ventre, Antonio Maffucci, Fran{\c{c}}ois Caire, Nechtan Le~Lostec,
  Antea Perrotta, Guglielmo Rubinacci, Bernard Sartre, Antonio Vento, and
  Antonello Tamburrino.
\newblock Design of a real-time eddy current tomography system.
\newblock {\em IEEE Transactions on Magnetics}, 53(3):1--8, 2017.

\bibitem[ZHS18]{zhou2018monotonicity}
Liangdong Zhou, Bastian Harrach, and Jin~Keun Seo.
\newblock Monotonicity-based electrical impedance tomography for lung imaging.
\newblock {\em Inverse Problems}, 34(4):045005, 2018.

\end{thebibliography}

\end{document}